\numberwithin{equation}{section}
\newtheorem{theorem}{Theorem}
\newtheorem{lemma}{Lemma}
\newtheorem{proposition}{Proposition}
\newtheorem{remark}{Remark}
\newtheorem{definition}{Definition}
\newtheorem{lem}{Lemma}[section]
\numberwithin{theorem}{section}
\numberwithin{corollary}{section}
\numberwithin{lemma}{section}
\numberwithin{definition}{section}
\numberwithin{proposition}{section}
\numberwithin{remark}{section}
\newcommand{\R}{\mathbb R}
\newcommand{\N}{\mathbb N}
\newcommand{\medint}{-\kern  -,375cm\int}
\newcommand{\dint}{\displaystyle\int}
\newcommand{\inte}{\int\!\!\!\!\int}
\newcommand{\essinf}{\mathop{\mathrm{ess\;\!inf}}}
\newcommand{\di}{\mathop{\mathrm{d}\!}}
\newcommand{\loc}{\mathrm{loc}}
\newcommand{\mathfrc}[1]{\text{\textfrc{#1}}}
\newcommand{\textfrc}[1]{{\frcseries#1}}
\begin{document}
\title{Comparison results for a nonlocal singular {elliptic}  problem}
\author{Barbara Brandolini, Ida de Bonis, Vincenzo Ferone, Bruno Volzone}
\date{}
\maketitle

\begin{abstract}
We provide symmetrization results in the form of mass concentration comparisons for fractional singular elliptic equations in bounded domains, coupled with homogeneous external Dirichlet conditions. Two types of comparison results are presented, depending on the summability of the right-hand side of the equation. The maximum principle arguments employed in the core of the proofs of the main results offer a nonstandard, flexible alternative to the ones described in \cite[Theorem 31]{FV}. Some interesting consequences are $L^{p}$ regularity results and nonlocal energy estimates for solutions.
\end{abstract}

\section{ Introduction}

In this paper we consider the following singular nonlocal problem

\begin{equation}\label{P}
\left\{
\begin{array}{ll}
(-\Delta)^su=\dfrac{f(x)}{u^{\gamma}}\qquad&\mbox{in }\Omega
\\
u>0&\mbox{in }\Omega
\\
u=0& \mbox{on }\mathbb{R}^N\setminus\Omega.
\end{array}
\right.
\end{equation}

\noindent
Here $s \in (0,1)$, $(-\Delta)^s$ stands for the fractional Laplacian operator, $\Omega$ is a bounded, open set  in $\mathbb{R}^N$ ($N \ge 2$) with Lipschitz boundary, $\gamma>0$ and $f$ is a nonnegative summable  function.\\
Our aim is to use symmetrization techniques in order to get a comparison result between the weak solution to problem \eqref{P} and the weak solution $v$ to a symmetrized problem, defined in the ball $\Omega^\star$ centered at the origin having the same measure as  $\Omega$, which stays in the same class as the original one (namely singular and nonlocal). 

After the seminal paper by Talenti \cite{T}, it is well-known that, if  $u \in H_0^1(\Omega)$, $v\in H_0^1(\Omega^\star)$ solve
$$
\left\{
\begin{array}{ll}
-\Delta u=f \qquad & \mbox{in } \Omega\\
u=0 & \mbox{on }\partial \Omega
\end{array}
\right. \quad \mbox{and} 
\quad
\left\{
\begin{array}{ll}
-\Delta v=f^\star \qquad & \mbox{in } \Omega^\star\\
v=0 & \mbox{on }\partial \Omega^\star,
\end{array}
\right.
$$
respectively, then 
\begin{equation}\label{talenti}
    u^\star(x)\le v(x), \qquad x \in \Omega^\star
\end{equation}
(here $^\star$ stands for the Schwarz rearrangement of a function, see Section 2 for definition and properties). 
%
From \eqref{talenti} we immediately derive, for instance, that any Lebesgue norm of $u$ is bounded from above by the same Lebesgue norm of $v$. Hence, the issue of estimating the solution $u$ of a Dirichlet problem in $\Omega$ is solved once we can estimate the solution $v$ of a symmetrized problem, which actually is a one-dimensional problem and clearly much easier to handle with. 

For local operators, the approach described above has been extended through the years to uniformly elliptic equations with lower order terms, linear and nonlinear parabolic equations, non uniformly elliptic equations, and also to problems with boundary conditions other than Dirichlet.  For a survey on the power of symmetrization techniques in both Calculus of Variations and PDEs theory we refer the interested reader to \cite{Ta}. In particular, symmetrization techniques have been applied to local, singular problems like \eqref{P} when, on the left-hand side, the Laplacian operator replaces the fractional one (see \cite{BCT}).

In the framework of nonlocal problems, the effect of symmetrization on fractional elliptic problems has  been investigated for the first time in \cite{DV} in a somewhat indirect way. Indeed, there it is used in an essential way the fact that a nonlocal problem involving the fractional Laplacian $(-\Delta)^{s}$, $s\in (0,1)$, can be linked to a suitable,
local extension problem, whose solution $\psi(x,y)$, an $s-$harmonic extension of the solution $u$ to the nonlocal problem, is defined on the infinite cylinder $\mathcal C_\Omega=\Omega\times (0,+\infty)$, to which classical symmetrization techniques (with respect to the variable $x \in \Omega$) can be applied. {For other results concerning the Neumann problems and the nonlocal Gaussian symmetrization see \cite{V}, \cite{FSV}, while symmetrization results for fractional parabolic equations of porous medium type have been achieved in \cite{SVV,VV1,VV2,V}. Moreover, we wish to mention here \cite{FVnon}, where the case of a fractional nonlinear problem is considered, and \cite{GON}, where a comparison type result in terms of the $L^{p}$ norms of solutions (thus weaker than the mass concentration comparison) is established for  
solutions to equations involving a nonlocal operator with integral kernels, hence not covering the fractional Laplacian.}
\\[0.2pt]
 In this note we adopt the direct approach introduced in \cite{FV}, where the authors deal with problem \eqref{P} in the case $\gamma=0$. This direct approach does not employ the local interpretation of the fractional Laplacian described above, while it makes a clever use of a nonlocal version of the classical P\'olya-Szeg\H{o} inequality, plus a sophisticated representation of the fractional Laplacian of a spherical mean function in $(N+2)$ dimensions, which allows to conclude by a maximum principle argument. \\ {The real novelty of this paper is that} the above mentioned interpretation is \emph{avoided} in the proofs of our new results, thus in this sense they offer an alternative to the crucial part of the proof of \cite[Theorem 31]{FV}. Furthermore, such a new technique seems to be rather flexible to be used in a broad variety of related contexts. 
 \noindent Our main results are the following theorems.
\begin{theorem}\label{mainTheorem1}
Let $s\in(0,1)$, $N\geq 2$, $\gamma>0$ and assume that $f\in L^{\infty}(\Omega)$, $f\geq 0$. If $u$ is the weak solution to problem \eqref{P} and $v$ is the weak solution to the  symmetrized problem 
\begin{equation}\label{probSim}
\left\{
\begin{array}{ll}
(-\Delta)^sv=\dfrac{||f||_{L^\infty(\Omega^{\star})}}{v^{\gamma}}\qquad&\mbox{in }\Omega^{\star}
\\
v>0& \mbox{in }\Omega^{\star}
\\
v=0& \mbox{on }\mathbb{R}^N\setminus\Omega^{\star},
\end{array}
\right.
\end{equation}
then
\begin{equation}\label{compSim}
 \int_{B_r(0)}u^\star(x)\,\di x\leq \int_{B_r(0)}v^\star(x)\,\di x,\qquad r>0.
 \end{equation}
\end{theorem}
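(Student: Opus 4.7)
My plan is as follows. First, since the nonlinearity $1/u^\gamma$ is singular at $u=0$, I would regularize both \eqref{P} and \eqref{probSim} by replacing the right-hand sides with $f/(u_n+\frac{1}{n})^\gamma$ and $\|f\|_{L^\infty(\Omega^{\star})}/(v_n+\frac{1}{n})^\gamma$, obtaining bounded weak approximations $u_n\to u$ and $v_n\to v$ in the natural functional spaces by standard monotone arguments. Since the Schwarz rearrangement is continuous with respect to $L^1$ convergence, it suffices to establish \eqref{compSim} for the approximants and then pass to the limit.

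Second, for fixed $t>0$, I would test the equation for $u_n$ against (a suitable Lipschitz approximation of) $\chi_{\{u_n>t\}}$. Exploiting the antisymmetry of the characteristic-function increment in the Gagliardo kernel, this yields the identity
\begin{equation*}
\int_{\{u_n>t\}}\frac{f}{(u_n+\tfrac{1}{n})^\gamma}\,\di x=\inte_{\{u_n>t\}\times\{u_n\leq t\}}\frac{u_n(x)-u_n(y)}{|x-y|^{N+2s}}\,\di x\,\di y.
\end{equation*}
The right-hand side is the nonlocal ``flux'' across the level set $\{u_n=t\}$. By a P\'olya--Szeg\H o type rearrangement inequality applied to super-level sets---the fractional analog of the isoperimetric inequality for perimeters of level sets---this flux is bounded below by the same quantity computed for $u_n^\star$. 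Combined with the analogous identity for the radial function $v_n$ (where equality holds throughout, since $\{v_n>t\}$ is already a centered ball) and the pointwise bound $f\leq\|f\|_{L^\infty(\Omega^{\star})}$, I would obtain a level-by-level integro-differential inequality comparing $u_n^\star$ to $v_n$.

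Finally, I would reformulate these level-set inequalities as a nonlocal differential inequality in the radial variable $r$ for the mass functions $U_n(r):=\int_{B_r(0)}u_n^\star\,\di x$ and $V_n(r):=\int_{B_r(0)}v_n\,\di x$. A maximum principle argument---bypassing the $s$-harmonic extension used in \cite[Theorem 31]{FV}---then yields $U_n\leq V_n$, from which \eqref{compSim} follows after passing to the limit $n\to\infty$. The main obstacle I anticipate is this last step: constructing a direct maximum principle for the nonlocal comparison operator acting on mass functions. This argument must simultaneously handle the nonlocal character of $(-\Delta)^s$ restricted to radially decreasing profiles and the monotonicity of the singular nonlinearity $t\mapsto t^{-\gamma}$; moreover, the sharp form of the P\'olya--Szeg\H o inequality for level-set fluxes (rather than for the full Gagliardo seminorm) required here will itself demand a careful rearrangement argument, likely based on a Riesz-type inequality for the kernel $|x-y|^{-(N+2s)}$.
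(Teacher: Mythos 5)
Your overall strategy matches the paper's: regularize the singular nonlinearity, reduce to the radially symmetric setting via a Riesz-type rearrangement inequality applied to level-set test functions $\mathcal{G}_{t,h}(u_k)$, and then run a maximum-principle argument directly on the cumulative mass functions, bypassing the $s$-harmonic extension of \cite[Theorem 31]{FV}. That high-level plan is the same one carried out in the paper.

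The genuine gap is exactly where you flag it: you do not actually construct the maximum principle, and that step is the one the paper presents as its main contribution. The paper's key device is an elementary integration-by-parts lemma (Lemma~\ref{MaxMin}): if $H_u(r)-H_v(r)=\int_0^r (u(\rho)-v(\rho))\rho^{N-1}\,d\rho$ attains a positive maximum at $\bar{r}$, then for every positive, bounded, increasing weight $h$ one has $\int_0^{\bar{r}}(u-v)\,h\,\rho^{N-1}\,d\rho>0$, with a dual statement for $K_u(r)-K_v(r)=\int_r^R(u-v)\rho^{N-1}\,d\rho$ and decreasing weights. What makes this applicable is a pair of monotonicity properties of the angularly averaged kernel $\Theta_{N,s}(\tau,\rho)$ (a hypergeometric function): it is increasing in $\tau$ for each fixed $\rho>\bar{r}$, and decreasing in $\rho$ for each fixed $\tau<\bar{r}$. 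These two facts are precisely what replaces the $(N+2)$-dimensional spherical-mean interpretation, and they must be isolated to show the left-hand side of the subtracted level-set inequality is strictly positive at the putative maximum point. Moreover, the singular right-hand side requires the elementary inequality $\frac{1}{a^\gamma}-\frac{1}{b^\gamma}\le\frac{\gamma}{a^{\gamma+1}}(b-a)$ of Lemma~\ref{lemmaab}, combined again with the maximum-principle lemma using the increasing weight $h(\tau)=(u_k^\star(\tau)+1/k)^{-(\gamma+1)}$, in order to show the right-hand side difference is strictly negative under the same hypothesis. Your sketch correctly identifies the Riesz inequality and the need for a direct maximum principle, but omits these two concrete lemmas and the monotonicity of $\Theta_{N,s}$; without them the contradiction at $\bar{r}$ cannot be closed.
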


\noindent In order to obtain some regularity results depending on the value of $\gamma$ and on the summability of $f$,  we will also prove the following comparison result.

\begin{theorem}\label{mainTheorem2}
Let $s\in(0,1)$, $N\geq 2$, $\gamma>0$ and assume that $f\in L^1(\Omega)$, $f\geq 0$. If $u$ is the weak solution to problem \eqref{P} and $v$ is the weak solution to the following problem 
\begin{equation}\label{probSimgamma}
\left\{
\begin{array}{ll}
(-\Delta)^sv=(\gamma+1)f^\star\qquad & \mbox{in }\Omega^\star
\\ \\
v=0 & \mbox{on } \mathbb{R}^N\setminus\Omega^\star,
\end{array}
\right.
\end{equation}
then 
\begin{equation}\label{compSim2}
 \int_{B_r(0)}u^\star(x)^{\gamma+1}\,\di x\leq \int_{B_r(0)}v^\star(x)\,\di x, \qquad r> 0.
 \end{equation}
\end{theorem}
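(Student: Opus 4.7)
The plan is to reduce the singular problem \eqref{P} to a linear non-singular one via the substitution $w := u^{\gamma+1}$, which absorbs the denominator of the right-hand side. The key input is a pointwise fractional chain-rule inequality: since $\Phi(t) = t^{\gamma+1}$ is convex on $[0,\infty)$ with $\Phi'(t) = (\gamma+1) t^{\gamma}$, the subgradient inequality $\Phi(u(x)) - \Phi(u(y)) \leq \Phi'(u(x))(u(x)-u(y))$ holds for all $x, y \in \R^N$. Multiplying by the singular kernel $c_{N,s}|x-y|^{-N-2s}$ and integrating in $y$ in the principal-value sense yields
\[(-\Delta)^s w(x) \leq (\gamma+1) u(x)^{\gamma} (-\Delta)^s u(x) = (\gamma+1) u(x)^{\gamma} \cdot \frac{f(x)}{u(x)^{\gamma}} = (\gamma+1) f(x)\]
pointwise in $\Omega$, while $w \equiv 0$ on $\R^N \setminus \Omega$. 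Thus $w$ is a (weak) subsolution of the linear fractional Dirichlet problem with datum $(\gamma+1) f$.

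I would then apply the direct maximum-principle mass-concentration comparison developed in the proof of Theorem \ref{mainTheorem1} (the alternative to \cite[Theorem 31]{FV}), in its linear form. Since $w$ is a subsolution of $(-\Delta)^s w = (\gamma+1) f$ in $\Omega$ with zero exterior data, and $v$ solves $(-\Delta)^s v = (\gamma+1) f^\star$ in $\Omega^\star$ with zero exterior data, the same argument (which accommodates subsolutions, the crucial rearrangement step being an inequality) delivers
\[\int_{B_r(0)} w^\star(x)\,\di x \leq \int_{B_r(0)} v^\star(x)\,\di x, \qquad r > 0.\]
Because $t \mapsto t^{\gamma+1}$ is nondecreasing on $[0,\infty)$, Schwarz rearrangement commutes with this composition, that is $(u^{\gamma+1})^\star = (u^\star)^{\gamma+1}$, and \eqref{compSim2} follows.

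The main technical obstacle is justifying the fractional chain-rule inequality rigorously at the level of weak solutions. Solutions of singular problems like \eqref{P} have limited regularity, with boundary decay dictated by $s$ and $\gamma$; for $\gamma$ large, the composition $u^{\gamma+1}$ need not belong to the natural energy space $H^s_0(\Omega)$, and the nonlocal formula above is only formal. The standard remedy is to work with truncations $\min(u,k)$, which are admissible test functions, together with a cut-off away from the possibly singular boundary set $\{u=0\}$, derive the chain-rule estimate in integrated form against nonnegative test functions, and pass to the limit by monotone convergence. One must also verify that the resulting weak inequality is strong enough to feed into the maximum-principle machinery of Theorem \ref{mainTheorem1}; testing against functions supported in super-level sets of $w^\star$ should suffice. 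Once these points are dealt with, the second step is purely linear and follows the blueprint of Theorem \ref{mainTheorem1}.
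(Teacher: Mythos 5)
Your proposal captures the same strategic idea as the paper's proof: use the convexity of $\Phi(t)=t^{\gamma+1}$ to exhibit $w=u^{\gamma+1}$ as a subsolution of the linear problem $(-\Delta)^s w=(\gamma+1)f$, feed this into the direct mass-concentration comparison of Theorem \ref{mainTheorem1}, and then use $(u^{\gamma+1})^\star=(u^\star)^{\gamma+1}$. The main difference is in how the regularity obstruction you correctly identify is resolved. Your proposed fix (truncations $\min(u,k)$ plus a boundary cut-off) is not the clean route, since truncations of $u$ do not themselves solve a tractable problem and the pointwise fractional chain rule is hard to justify at the level of the singular solution; the paper instead carries out the whole derivation on the approximating solutions $u_k$ of \eqref{pk}, which lie in $X_0^s(\Omega)\cap L^\infty(\Omega)$ so that $u_k^{\gamma+1}\in X_0^s(\Omega)$ automatically, and applies the \emph{weak} chain-rule inequality \eqref{leonori} against the level-set test function $\varphi=\mathcal{G}_{t,h}(u_k^{\gamma+1})$ (equivalently, testing the weak formulation with $\phi=u_k^\gamma\,\mathcal{G}_{t,h}(u_k^{\gamma+1})$). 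This keeps everything inside the maximum-principle machinery of Theorem \ref{mainTheorem1} at each fixed $k$, with the right-hand side controlled by $\int_0^{r} f^\star(\rho)\rho^{N-1}\,\di\rho$ via $u_k^\gamma/(u_k+1/k)^\gamma\le 1$ and Hardy--Littlewood, and the final estimate is recovered in the limit $k\to\infty$; apart from this approximation detail the two arguments coincide.
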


We stress that analogous estimates in the local case are proved in \cite{BCT}. For example, in the same reference, instead of \eqref{compSim}, a comparison result between mass concentrations of $u^\star(x)^{\gamma}$ and $v^\star(x)^{\gamma}$ is proved. It turns out that our result slightly improves the quoted ones when $\gamma>1$ (see Remark \ref{remgamma} for details).

Moreover, some rather simple modifications to our arguments allow us to get comparison results for a singular fractional elliptic equation with a zero order term posed in $\Omega$, of the form
\[
(-\Delta)^su+ cu=\dfrac{f}{u^{\gamma}},
\]
for some bounded coefficient $c\geq0$, complemented with exterior Dirichlet boundary coefficients.
\\

\noindent The paper is organized as follows. In Section 2 we provide the functional setting of the problem and we recall some basic notion about rearrangements. Section 3 is devoted to the proof of Theorem \ref{mainTheorem1}. In Section 4 we prove Theorem \ref{mainTheorem2}, which is the key ingredient to prove the regularity results contained in Section 5. 

\section{Notation and preliminaries}

\subsection{Functional setting}

Let $s \in (0,1)$. For any open set $\Omega$ and any measurable function $u$ on $\Omega$, we introduce the fractional Gagliardo seminorm
\[
[u]_{H^{s}(\Omega)}=\left(\inte_{\Omega\times \Omega}\frac{|u(x)-u(y)|^{2}}{|x-y|^{N+2s}}\,\di x\di y\right)^{1/2}.
\]
Then we define the fractional Sobolev space $H^{s}(\Omega)$ as the space
\[
H^{s}(\Omega)=\left\{u\in L^{2}(\Omega):\,[u]_{H^{s}(\Omega)}<\infty\right\},
\]
endowed with the norm
\[
\|u\|_{H^{s}(\Omega)}=\| u \|_{L^{2}(\Omega)}+[u]_{H^{s}(\Omega)}.
\]
We denote by $H_{0}^{s}(\Omega)$ the closure of $C_{c}^{\infty}(\Omega)$ in the $H^{s}(\Omega)$ topology. Moreover, we will define the space 
\[
H_{\mathrm{loc}}^{s}(\Omega)=\left\{u:\Omega\rightarrow \R:\,u_{| K}\in H^{s}(K),\,\text{for all } K \subset\subset \Omega\right\}. 
\]


There is a strict connection between the space $H^s(\R^N)$ and the fractional Laplacian operator $(-\Delta)^s$. 
For any $s\in(0,1)$ and  $u \in$ {\rsfs S} (the classical Schwartz class), the fractional Laplacian operator is defined as
$$
(-\Delta)^s u=\gamma(N,s)\, \mathrm{P.V.} \int_{\R^N}\frac{u(x)-u(y)}{|x-y|^{N+2s}}\,\di y,
$$
where
\begin{equation}\label{gamma}
\gamma(N,s)=\frac{s\, 2^{2s}\Gamma\left(\frac{N+2s}{2}\right)}{\pi^{\frac N 2} \Gamma(1-s)}.
\end{equation}
The following result can be found in \cite[Proposition 3.6]{DNPALVAL}.
\begin{proposition}
Let $s\in (0,1)$ and $u \in H^s(\R^N)$. Then
\begin{equation*}
[u]_{H^s(\R^N)}^2=\frac{2}{\gamma(N,s)}||(-\Delta)^{\frac s 2 } u||_{L^2(\R^N)}^2.
\end{equation*}
\end{proposition}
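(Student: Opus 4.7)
The natural approach is via the Fourier transform, since both quantities in the identity can be expressed as weighted $L^{2}$ norms of $\hat u$. The plan is to handle each side separately and then match them via the known value of a one-variable Fourier integral.

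On the right-hand side, I would invoke the definition of $(-\Delta)^{s/2}$ as a Fourier multiplier, namely $\widehat{(-\Delta)^{s/2}u}(\xi)=|\xi|^{s}\hat u(\xi)$, so Plancherel's theorem immediately gives
\[
\|(-\Delta)^{s/2}u\|_{L^{2}(\R^{N})}^{2}=\int_{\R^{N}}|\xi|^{2s}|\hat u(\xi)|^{2}\,\di\xi.
\]
For the left-hand side, I would change variables $z=x-y$ in the double integral and use Tonelli (the integrand is nonnegative) to write
\[
[u]_{H^{s}(\R^{N})}^{2}=\int_{\R^{N}}\frac{1}{|z|^{N+2s}}\Big(\int_{\R^{N}}|u(y+z)-u(y)|^{2}\,\di y\Big)\di z.
\]
Applying Plancherel to the inner integral and recognizing that the Fourier transform of $u(\cdot+z)-u(\cdot)$ equals $(e^{iz\cdot\xi}-1)\hat u(\xi)$, I exchange the order of integration again to obtain
\[
[u]_{H^{s}(\R^{N})}^{2}=\int_{\R^{N}}|\hat u(\xi)|^{2}\Big(\int_{\R^{N}}\frac{|e^{iz\cdot\xi}-1|^{2}}{|z|^{N+2s}}\,\di z\Big)\di\xi.
\]

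The next step is to evaluate the inner bracket. By rotational invariance one may assume $\xi=|\xi|e_{1}$; then the scaling $z\mapsto z/|\xi|$ shows that the bracket equals $C(N,s)|\xi|^{2s}$, where
\[
C(N,s)=\int_{\R^{N}}\frac{|e^{iz_{1}}-1|^{2}}{|z|^{N+2s}}\,\di z=2\int_{\R^{N}}\frac{1-\cos z_{1}}{|z|^{N+2s}}\,\di z.
\]
Comparing with the right-hand side computed above, I see that the whole proposition reduces to proving $C(N,s)=2/\gamma(N,s)$ with $\gamma(N,s)$ as in \eqref{gamma}.

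The main (and only real) obstacle is this last explicit evaluation, which is a classical but delicate computation with special functions. I would carry it out by slicing: write $z=(z_{1},z')$ with $z'\in\R^{N-1}$, integrate first in $z'$ using polar coordinates together with the Beta-function identity $\int_{0}^{\infty}\rho^{N-2}(\rho^{2}+z_{1}^{2})^{-(N+2s)/2}\di\rho=\tfrac{1}{2}|z_{1}|^{-1-2s}B\bigl(\tfrac{N-1}{2},\tfrac{1+2s}{2}\bigr)$, and then compute the remaining one-dimensional integral $\int_{\R}(1-\cos z_{1})|z_{1}|^{-1-2s}\,\di z_{1}$ via the standard formula $\int_{0}^{\infty}t^{-1-2s}(1-\cos t)\,\di t=-\cos(\pi s)\Gamma(-2s)$ and the reflection/duplication identities for $\Gamma$. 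Regrouping the factors produces exactly $2/\gamma(N,s)$; at that point the proposition is proved. Alternatively, one can circumvent this computation altogether by citing the well-known evaluation in \cite{DNPALVAL}, but since the identity with the precise constant is the whole content of the statement, the Gamma-function bookkeeping cannot really be avoided.
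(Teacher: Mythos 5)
Your proof is correct, and it is essentially the standard Fourier/Plancherel argument. The paper does not prove this proposition itself; it simply cites \cite[Proposition 3.6]{DNPALVAL}, and the computation you outline (Plancherel on both sides, the change of variables $z=x-y$, the identification of the bracket $\int_{\R^N}|e^{iz\cdot\xi}-1|^2|z|^{-N-2s}\,\di z = C(N,s)|\xi|^{2s}$ by scaling and rotational invariance, and the Beta/Gamma bookkeeping showing $C(N,s)=2/\gamma(N,s)$) is exactly the proof given in that reference. Your one-dimensional special-function identity $\int_0^\infty t^{-1-2s}(1-\cos t)\,\di t=-\cos(\pi s)\Gamma(-2s)$ does check out: integrating by parts and using $\int_0^\infty x^{-2s}\sin x\,\di x=\Gamma(1-2s)\cos(\pi s)$ gives $\frac{\Gamma(1-2s)\cos(\pi s)}{2s}$, which equals $-\Gamma(-2s)\cos(\pi s)$ via $\Gamma(1-2s)=-2s\,\Gamma(-2s)$.
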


The analytic theory of the fractional Laplacian in the whole $\R^N$ is nowadays considered classical and we refer the interested reader for example to \cite{S}.

We are interested in Dirichlet problems defined in bounded domains. To this aim, we  consider the space $X_0^s(\Omega)$, defined as 
$$
X_0^s(\Omega)=\left\{u \in H^s(\R^N): \> u=0 \mbox{ a.e. in } \R^N \setminus \Omega\right\}.
$$
When $\Omega$ is an open bounded set with Lipschitz boundary, it can be proved that (see \cite[Proposition B.1]{BrasParSquas}) $X_0^s(\Omega)$ coincides with the completion of $C_{c}^{\infty}(\Omega)$ with respect to the seminorm $ [\cdot]_{H^{s}(\R^{N})}$.  Moreover, when $2s\neq1$, it can also proved that $X_0^s(\Omega)$ coincides with $H_{0}^{s}(\Omega)$ (see \cite[Proposition B.1]{Brasco}), while in general for $2s=1$, we have a strict inclusion
 $$X_0^s(\Omega)\subset H_{0}^{s}(\Omega)$$ 
(see \cite[Remark 2.1]{secondeigenbrasco}). Indeed, we have that $X_0^s(\Omega)$ coincides with the interpolation space $H^{1/2}_{00}(\Omega)$ (see \cite[Appendix]{SirBonfVaz}).
\\
A consequence of fractional Poincar\'e inequalities (see \cite[Lemma 2.4]{Brasco}) is that we can equip the space $X_0^s(\Omega)$ with the Gagliardo seminorm
\[
\|u\|_{X_{0}^{s}(\Omega)}=[u]_{H^{s}(\R^{N})}=\left(\inte_{\R^{2N}}\frac{|u(x)-u(y)|^{2}}{|x-y|^{N+2s}}\,\di x\di y\right)^{1/2}.
\]

\noindent From the definition of $X_{0}^{s}(\Omega)$ it easily follows that for each $u\in X_{0}^{s}(\Omega)$



\begin{equation*}\label{norm2}
||u||_{X_0^s(\Omega)} = \left(\inte_{Q}\frac{|u(x)-u(y)|^2}{|x-y|^{N+2s}}\di x\di y\right)^{\frac 1 2}
\end{equation*}
where $Q=\R^{2N}\setminus\left(\right.${\rsfs C}$\Omega \times${\rsfs C}$\left.\Omega\right)$ and {\rsfs C}$\Omega=\R^N\setminus\Omega$.\\
Then we consider the \emph{restricted} fractional Laplacian $(-\Delta|_{\Omega})_{rest}$ on $\Omega$, defined by duality on the space $X_{0}^{s}(\Omega)$. Since there will be no matter of confusion, we shall keep the classical notation $(-\Delta)^{s}$ for such operator. Moreover, denoted by $X^{-s}(\Omega)$ its dual, the operator $$(-\Delta)^s: X_0^s(\Omega) \to X^{-s}(\Omega)$$ is continuous. \\
Finally, we recall that the following fractional Sobolev embedding holds true, see for instance \cite[Theorem 6.5]{DNPALVAL}.

\begin{theorem}
Let $s \in (0,1)$ and $N>2s$. There exists a positive constant $\mathcal{S}(N,s)$ such that, for any measurable and compactly supported function $u:\R^N \to \R$, it holds
\begin{equation*}
||u||_{L^{2_s^\ast}(\R^N)}^2 \le \mathcal{S}(N,s)\inte_{\R^{2N}} \frac{|u(x)-u(y)|^2}{|x-y|^{N+2s}}\,\di x\di y,
\end{equation*}
where
$$
2_s^\ast=\frac{2N}{N-2s}
$$
is the critical Sobolev exponent. In particular, if $u \in X_0^s(\Omega)$, we have
\begin{equation*} \label{sobolev}
||u||_{L^{2_s^\ast}(\Omega)}^2 \le \mathcal{S}(N,s)\inte_{\R^{2N}}\frac{|u(x)-u(y)|^2}{|x-y|^{N+2s}}\,\di x\di y.
\end{equation*}
\end{theorem}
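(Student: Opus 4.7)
The plan is to deduce the fractional Sobolev inequality from the Hardy--Littlewood--Sobolev (HLS) inequality by routing through the identification between the Gagliardo seminorm and the $L^{2}$-norm of the half-Laplacian that was recalled just above from \cite{DNPALVAL}. By that proposition, for $u\in H^{s}(\R^{N})$,
\[
[u]_{H^{s}(\R^{N})}^{2}=\frac{2}{\gamma(N,s)}\,\|(-\Delta)^{s/2}u\|_{L^{2}(\R^{N})}^{2},
\]
so the theorem for compactly supported $u$ reduces to the estimate $\|u\|_{L^{2_s^\ast}(\R^{N})}\le C(N,s)\,\|(-\Delta)^{s/2}u\|_{L^{2}(\R^{N})}$, from which one reads off $\mathcal S(N,s)=2C(N,s)^{2}/\gamma(N,s)$. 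The ``in particular'' half of the statement follows for free: any $u\in X_{0}^{s}(\Omega)$, extended by zero outside $\Omega$, is compactly supported in $\overline{\Omega}$, and has identical $L^{2_s^\ast}$-norm whether integrated over $\Omega$ or over $\R^{N}$.

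First I would reduce to $u$ in the Schwartz class by approximation: a measurable, compactly supported $u$ with finite Gagliardo seminorm lies in $H^{s}(\R^{N})$ and is approximated in this norm by functions of $C_{c}^{\infty}(\R^{N})$ via convolution with a standard mollifier (all quantities in the inequality are lower semicontinuous under this approximation). For smooth $u$, one has the pointwise representation
\[
u=I_{s}\bigl((-\Delta)^{s/2}u\bigr),
\]
where $I_{s}$ denotes the Riesz potential of order $s$, i.e.\ convolution with $c_{N,s}\,|x|^{-(N-s)}$. This identity is immediate on the Fourier side, since $(-\Delta)^{s/2}$ has symbol $|\xi|^{s}$ while $I_{s}$ has symbol $|\xi|^{-s}$, so their composition is the identity operator.

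Then, applying the HLS inequality
\[
\|I_{s}f\|_{L^{2_s^\ast}(\R^{N})}\le C(N,s)\,\|f\|_{L^{2}(\R^{N})},\qquad 2s<N,
\]
with $f=(-\Delta)^{s/2}u$, yields the target bound on Schwartz $u$, and the general compactly supported measurable case follows by the density step above.

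The main obstacle in this route is the HLS inequality itself, which is classical but nontrivial; the standard argument combines Cavalieri's principle with Young's convolution inequality on Lorentz spaces, after noting that $|x|^{-(N-s)}\in L^{N/(N-s),\infty}(\R^{N})$. A completely self-contained alternative is the dyadic level-set argument of Di Nezza--Palatucci--Valdinoci: one decomposes $\|u\|_{L^{2_s^\ast}}^{2_s^\ast}$ over the dyadic rings $D_{k}=\{2^{k}<|u|\le 2^{k+1}\}$ via Cavalieri, and controls $|D_{k}|$ by inserting pairs $(x,y)$ with $x\in D_{k}$ and $y\notin\{|u|>2^{k-1}\}$ into the double integral defining $[u]_{H^{s}(\R^{N})}^{2}$, followed by a summation in $k$ that recovers the sharp exponent $2_s^\ast$. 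I would fall back on this second plan if I wished to avoid any Fourier prerequisites.
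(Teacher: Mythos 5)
The paper offers no proof of this theorem; it simply cites \cite[Theorem 6.5]{DNPALVAL}. Your fallback plan --- the dyadic level-set decomposition over $D_k=\{2^k<|u|\le 2^{k+1}\}$ combined with Cavalieri's principle --- is exactly the proof in that reference, so it reproduces what the paper implicitly relies on. Your primary route, via the Riesz-potential identity $u = I_s\bigl((-\Delta)^{s/2}u\bigr)$ together with the identification $[u]_{H^s(\R^N)}^2=\tfrac{2}{\gamma(N,s)}\|(-\Delta)^{s/2}u\|_{L^2}^2$ and Hardy--Littlewood--Sobolev, is a correct classical alternative. The trade-off: the Fourier/HLS route is shorter once HLS is granted, but it outsources the hard analytic content; the dyadic argument of Di Nezza--Palatucci--Valdinoci is longer but entirely elementary and Fourier-free, which is presumably why it is the one cited.

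One step in your reduction deserves care. The theorem is asserted for arbitrary measurable, compactly supported $u$, with no a priori integrability. Your approximation argument begins by placing $u$ in $H^s(\R^N)$, hence in $L^2$; but $L^2$-membership is morally a weaker instance of the very Sobolev embedding you want, so it should be verified independently rather than assumed. This is easy but not automatic: take balls $B\supset\supp u$ and $B'\supsetneq B$; for $y\in B'\setminus B$ one has $u(y)=0$, while $|x-y|^{-N-2s}$ is bounded below by a positive constant for $x\in B$, $y\in B'\setminus B$, so integrating the Gagliardo kernel over $B\times(B'\setminus B)$ yields
\[
\|u\|_{L^2(B)}^2 \;\le\; C(B,B',N,s)\,\inte_{\R^{2N}}\frac{|u(x)-u(y)|^2}{|x-y|^{N+2s}}\,\di x\di y.
\]
With this in hand, mollification (and Minkowski's integral inequality, which shows $[u*\varphi_\varepsilon]_{H^s}\le[u]_{H^s}$) produces the needed $C_c^\infty$ approximants, and the rest of your argument, including the ``in particular'' clause for $u\in X_0^s(\Omega)$ with $\Omega$ bounded, goes through.
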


We end this subsection with an inequality that will turns out to be very useful in the sequel. We recall that, when we deal with fractional derivatives, the chain rule does not hold true. It can be replaced by an inequality where a convex or concave function is involved (see \cite[Proposition 4]{LPPS}) and \cite[Lemma 3.3]{secondeigenbrasco}.

\begin{proposition}
Assume that $\Phi: \R \to \R$ is a Lipschitz continuous, convex function, such that $\Phi(0)=0$. Then, if $u \in X_0^s(\Omega)$, we have
\begin{equation*}
(-\Delta)^s \Phi(u)\le \Phi'(u)(-\Delta)^s u \qquad \mbox{weakly in } \Omega,
\end{equation*}
in the sense that for all nonnegative $\varphi\in X_{0}^{s}(\Omega)$ we have
\begin{align}\label{leonori}
\inte_{\R^{2N}} &\frac{[\Phi(u(x))-\Phi(u(y))][\varphi(x)-\varphi(y)]}{|x-y|^{N+2s}}\di x\di y
\\
&\leq \inte_{\R^{2N}} \frac{[u(x)-u(y)][\Phi^{\prime}(u(x))\,\varphi(x)-\Phi^{\prime}(u(y))\,\varphi(y)]}{|x-y|^{N+2s}}\,\di x\di y. \notag
\end{align}
Analogously, if $\Psi: \R \to \R$ is a Lipschitz continuous, concave function, such that $\Psi(0)=0$, and $u \in X_0^s(\Omega)$, then
\begin{equation*}\label{leonori1}
(-\Delta)^s \Psi(u)\ge \Psi'(u)(-\Delta)^s u \qquad \mbox{weakly in } \Omega.
\end{equation*}

\end{proposition}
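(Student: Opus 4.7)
My plan is to reduce the integrated inequality \eqref{leonori} to a pointwise inequality on the integrands. Specifically, I will show that for every $x,y\in\R^{N}$ and every nonnegative function $\varphi$ on $\R^{N}$,
\begin{equation*}
\bigl[\Phi(u(x))-\Phi(u(y))\bigr]\bigl[\varphi(x)-\varphi(y)\bigr]\le \bigl[u(x)-u(y)\bigr]\bigl[\Phi'(u(x))\,\varphi(x)-\Phi'(u(y))\,\varphi(y)\bigr].
\end{equation*}
Once this is proved, dividing both sides by the positive kernel $|x-y|^{N+2s}$ and integrating over $\R^{2N}$ gives \eqref{leonori} immediately.

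The pointwise inequality rests only on convexity. Abbreviating $p:=\Phi(u(x))-\Phi(u(y))$ and $q:=u(x)-u(y)$, the two supporting-line inequalities
\begin{equation*}
\Phi(a)\ge \Phi(b)+\Phi'(b)(a-b),\qquad \Phi(b)\ge \Phi(a)+\Phi'(a)(b-a),
\end{equation*}
applied at $a=u(x)$, $b=u(y)$, read $\Phi'(u(y))\,q\le p\le \Phi'(u(x))\,q$, equivalently
\begin{equation*}
p-\Phi'(u(x))\,q\le 0\le p-\Phi'(u(y))\,q.
\end{equation*}
A one-line rearrangement shows that the target pointwise inequality is equivalent to
\begin{equation*}
\varphi(x)\bigl(p-\Phi'(u(x))\,q\bigr)\le \varphi(y)\bigl(p-\Phi'(u(y))\,q\bigr),
\end{equation*}
and under the sign assumption $\varphi(x),\varphi(y)\ge 0$ the left-hand side is nonpositive while the right-hand side is nonnegative. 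The conclusion follows.

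To legitimize the integrated inequality I only need the left-hand side of \eqref{leonori} to be finite (if the right-hand side were $+\infty$, the inequality is trivial). This is guaranteed by the hypotheses on $\Phi$: since $\Phi$ is $L$-Lipschitz and $\Phi(0)=0$, the bound $|\Phi(u(x))-\Phi(u(y))|\le L\,|u(x)-u(y)|$ implies $\Phi(u)\in X_{0}^{s}(\Omega)$ with $[\Phi(u)]_{H^{s}(\R^{N})}\le L\,[u]_{H^{s}(\R^{N})}$, and the Cauchy-Schwarz inequality then bounds the left-hand side by $[\Phi(u)]_{H^{s}(\R^{N})}\,[\varphi]_{H^{s}(\R^{N})}<\infty$.

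Finally, the concave case for $\Psi$ follows at once by applying what has just been proved to the convex function $\Phi:=-\Psi$ and reversing the inequality sign. The entire argument hinges on the convexity-based pointwise estimate, and no substantial obstacle is anticipated; the only minor subtlety worth noting is the pointwise definition of $\Phi'$ at the (measure-zero) set where a convex Lipschitz function fails to be differentiable, which can be handled by picking, e.g., the right derivative $\Phi'_{+}$ everywhere, as the supporting-line inequalities still hold with this choice.
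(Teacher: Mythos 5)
Your proof is correct. Note, however, that the paper itself does not supply a proof of this proposition: it simply cites \cite[Proposition 4]{LPPS} and \cite[Lemma 3.3]{secondeigenbrasco}, so there is no in-paper argument to compare against. Your reduction to the pointwise inequality
\[
[\Phi(u(x))-\Phi(u(y))][\varphi(x)-\varphi(y)]\le [u(x)-u(y)][\Phi'(u(x))\varphi(x)-\Phi'(u(y))\varphi(y)]
\]
via the two supporting-line inequalities, followed by the observation that the rearranged form
\[
\varphi(x)\bigl(p-\Phi'(u(x))q\bigr)\le \varphi(y)\bigl(p-\Phi'(u(y))q\bigr)
\]
is a consequence of the sign conditions $p-\Phi'(u(x))q\le 0\le p-\Phi'(u(y))q$ together with $\varphi\ge 0$, is exactly the standard convexity argument that underlies the cited references, so your route is consonant with the literature even if the paper leaves it implicit. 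Your three housekeeping remarks are also all on point: the $L$-Lipschitz bound and $\Phi(0)=0$ give $\Phi(u)\in X_0^s(\Omega)$ so the left-hand side of \eqref{leonori} is absolutely convergent by Cauchy--Schwarz (and the pointwise inequality then forces the negative part of the right-hand integrand to be integrable as well, making the asserted inequality meaningful even when the right side is $+\infty$); taking $\Phi'=\Phi'_{+}$ preserves the supporting-line inequalities and yields a measurable bounded selection of the subdifferential; and the concave case follows by applying the convex case to $-\Psi$.
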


\subsection{Schwarz symmetrization}

We now recall some notions about Schwarz symmetrization and some related fundamental pro-\\perties. For more details we refer the interested reader, for instance,  to \cite{Bandle,BS,Kes,Talenti}.

\noindent 
Let $u$ be a real measurable function on $\Omega$. If $u$ is such that its \textit{distribution function} $\mu_u$ satisfies 
$$\mu_u(t):=|\{x\in\Omega: |u(x)|>t\}|<+\infty, \quad \mbox{for every}\,\,t>0,$$
%
we define the decreasing rearrangement of $u$ as the generalized inverse of $\mu_u$, that is
$$
u^\ast(\sigma)=\sup\{t\ge 0: \mu_u(t)>\sigma\}, \quad \sigma>0.
$$
The radially symmetric, decreasing rearrangement of $u$, also known as  the Schwarz decreasing rearrangement of $u$, is hence defined as 
$$u^{\star}(x)=u^\ast(\omega_N|x|^N)
\qquad x \in \Omega^\star,
$$
where $\omega_N$ is the measure of the unitary ball in $\R^N$, and $\Omega^\star$ is the ball (centered at the origin) having the same measure as $\Omega$.
From the definitions given above we can easily deduce that $u$, $u^\ast$ and $u^\star$ are equi-distributed, that is 
$$
\mu_u=\mu_{u^\ast}=\mu_{u^\star}.
$$
Moreover, the following properties hold true.

\begin{proposition}
 Let $u,v:\Omega \to \mathbb{R}$ be two measurable functions
satisfying
\begin{equation}  \label{uv}
\mu_u(t)<\infty , \quad \mu_v(t)<\infty, \quad \mbox{for every } t>0.
\end{equation}
Then 
\begin{itemize}
\item[(i)] if $|v|\le |u|$ a.e., then $v^\ast \le u^\ast$; 

\item[(ii)]   $(c\, u)^\ast=|c|\, u^\ast$, for every $c \in \mathbb{R}$;

\item[(iii)]  if $H:[0,\infty] \to [0,\infty]$ is an increasing,
continuous function, then $H(|u|)^\ast=H\left(u^\ast\right)$;

\item[(iv)]  if $u\in L^p(\Omega)$, $1\leq p\leq \infty$, then $u^\ast \in L^p(0,|\Omega|)$, $u^\star\in L^p(\Omega^\star)$ and $$||u||_{L^p(\Omega)}=||u^\ast||_{L^p(0,|\Omega|)}=||u^{\star}||_{L^p(\Omega^{\star})}.$$

 \end{itemize}
\end{proposition}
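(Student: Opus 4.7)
The plan is to reduce all four properties to statements about the distribution function $\mu_u$, exploiting the characterization $u^\ast(\sigma)=\sup\{t\ge 0:\mu_u(t)>\sigma\}$ as a generalized inverse. The guiding principle is that equi-distributed functions have identical decreasing (hence Schwarz) rearrangements, since both the distribution function and the generalized-inverse operation are intrinsic to the equi-distribution class.

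For (i), the pointwise inequality $|v|\le |u|$ gives the set inclusion $\{|v|>t\}\subseteq \{|u|>t\}$ for every $t>0$, hence $\mu_v(t)\le\mu_u(t)$; taking suprema then yields $v^\ast\le u^\ast$. For (ii), when $c\neq 0$, the identity $\{|cu|>t\}=\{|u|>t/|c|\}$ produces $\mu_{cu}(t)=\mu_u(t/|c|)$, and the change of variable $s=t/|c|$ in the generalized inverse extracts the factor $|c|$; the case $c=0$ is trivial.

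Property (iii) requires a bit more care. Since $H:[0,\infty)\to[0,\infty)$ is continuous and increasing, it commutes with suprema of subsets of its domain, so
\[
H(u^\ast(\sigma))=H\!\left(\sup\{t\ge 0:\mu_u(t)>\sigma\}\right)=\sup\{H(t):\mu_u(t)>\sigma\}.
\]
Setting $\tau=H(t)$ and using $\mu_{H(|u|)}(\tau)=\mu_u(H^{-1}(\tau))$ on the range of $H$ (with $H^{-1}$ interpreted as a generalized inverse to accommodate flat pieces of $H$), the right-hand side equals $\sup\{\tau\ge 0:\mu_{H(|u|)}(\tau)>\sigma\}=H(|u|)^\ast(\sigma)$.

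Finally, (iv) follows from Cavalieri's layer-cake formula
\[
\int_\Omega |u|^p\,\di x=p\int_0^\infty t^{p-1}\mu_u(t)\,\di t, \qquad 1\le p<\infty,
\]
together with the analogous identities for $u^\ast$ on $(0,|\Omega|)$ and $u^\star$ on $\Omega^\star$; since $\mu_u=\mu_{u^\ast}=\mu_{u^\star}$, all three $L^p$ norms coincide. The case $p=\infty$ is handled separately via $\|u\|_{L^\infty(\Omega)}=\inf\{t\ge 0:\mu_u(t)=0\}$, which again depends only on $\mu_u$. The delicate point throughout the proposition is the bookkeeping on plateaus of $H$ in (iii); once those are handled via a generalized inverse, every assertion reduces to a one-line application of the equi-distribution / layer-cake paradigm.
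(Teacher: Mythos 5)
Your proposal is correct and follows the standard textbook route: all four assertions are reduced to properties of the distribution function $\mu_u$, using that $u^\ast$ is the generalized inverse of $\mu_u$ and that $u$, $u^\ast$, $u^\star$ are equi-distributed. The paper itself does not prove this proposition; it merely states it as classical background and refers to the standard monographs (Bandle, Bennett--Sharpley, Kesavan, Talenti), so there is no ``paper's proof'' to diverge from, and yours is the argument one finds in those references.

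Two small points worth tightening. In~(iii), the reduction $\mu_{H(|u|)}(\tau)=\mu_u\bigl(H^{-1}(\tau)\bigr)$ really does require the right generalized inverse, namely $H^{-1}(\tau)=\sup\{s\ge 0: H(s)\le\tau\}$, for which the equivalence $H(s)>\tau\iff s>H^{-1}(\tau)$ holds precisely because $H$ is continuous and nondecreasing; with that convention your change of variable $\tau=H(t)$ produces the same supremum, and the plateau bookkeeping goes through. (If ``increasing'' is read as \emph{strictly} increasing, the generalized inverse collapses to the ordinary one and the whole step is immediate.) In~(i), the set inclusion $\{|v|>t\}\subseteq\{|u|>t\}$ holds only modulo a null set, which is all that is needed to conclude $\mu_v\le\mu_u$. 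Everything else, including the layer-cake computation for~(iv) and the $p=\infty$ case via $\|u\|_\infty=\inf\{t:\mu_u(t)=0\}$, is exactly right.
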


\noindent Furthermore, the celebrated Hardy-Littlewood inequality holds true
\begin{equation}\label{hl}
\int_{\Omega} |u(x)v(x)|\,\di x\leq \int_0^{|\Omega|}u^\ast(r)v^\ast(r)\,\di r = \int_{\Omega^{\star}}u^{\star}(x)v^{\star}(x)\,\di x.
\end{equation}

\noindent We can also define the maximal function of the rearrangement of $u$
$$
u^{\ast\ast}(\sigma)=\frac 1 \sigma \int_0^\sigma u^\ast(t)\, \di t, \qquad \sigma >0.
$$ 
It is easy to prove (see, for example, \cite{BS}) that the Lebesgue norms of $u^\ast$ and $u^{\ast\ast}$ are equivalent, that is there exists $C>0$ such that
\begin{equation*}\label{ustarstar}
||u^\ast||_{L^p(0,|\Omega|)}\le ||u^{\ast\ast}||_{L^p(0,|\Omega|)}\le C ||u^\ast||_{L^p(0,|\Omega|)}.
\end{equation*}

Since we will prove comparison results between integrals of solutions to nonlocal problems, the following definition will play a fundamental role.

\begin{definition}
 Let $u,v\in L^1_{\mathrm{loc}}(\R^N)$.
 We say that $u$ is less concentrated than $v$, and we write $u \prec v$, if for every $\sigma>0$ we have
 $$
 \int_0^\sigma u^\ast(t)\, \di t \le \int_0^\sigma v^\ast(t)\, \di t,
 $$
 or, equivalently, for every $r>0$,
 $$
 \int_{B_r(0)}u^\star(x)\, \di x\le \int_{B_r(0)}v^\star(x)\, \di x.
 $$
\end{definition}
\noindent Clearly, this definition can be adapted to functions defined in an open subset $\Omega$ of $\R^N$, by extending the functions to zero outside $\Omega$. 
The partial order relationship $\prec$ is called comparison of mass concentrations and it satisfies some nice properties (see \cite{ALT}).

\begin{proposition}\label{Propconves}
Let $u,v \in L^1(\Omega)$ be two nonnegative functions. Then, the following statements are equivalent:
\begin{itemize}
\item[(a)] $u \prec v$;
\item[(b)] for all nonnegative $\varphi\in L^{\infty}(\Omega)$
$$\int_\Omega u(x)\varphi(x)\, \di x\leq \int_0^{|\Omega|} v^\ast(r)\varphi^\ast(r)\, \di r = \int_{\Omega^\star}v^\star(x)\varphi^\star(x)\, \di x;$$
\item[(c)] for all convex, nonnegative, Lipschitz function $\Phi$, such that $\Phi(0)=0,$
$$\int_{\Omega}\Phi(u(x))\,\di x\leq \int_{\Omega}\Phi(v(x))\,\di x.$$
\end{itemize}
\end{proposition}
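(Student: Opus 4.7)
The plan is to establish the two equivalences (a) $\Leftrightarrow$ (b) and (a) $\Leftrightarrow$ (c) separately, rather than via a cyclic chain. The two main tools are \emph{Hardy's lemma} (integral inequalities against a nonincreasing weight are preserved) and the truncation representation of a convex Lipschitz $\Phi$ with $\Phi(0)=0$, namely
\[
\Phi(t) = \Phi^{\prime}(0^{+})\,t + \int_{0}^{\infty}(t-\lambda)_{+}\, d\mu(\lambda), \qquad t\geq 0,
\]
where $\mu = \Phi^{\prime\prime}$ is a finite nonnegative Radon measure on $[0,\infty)$ (finiteness follows from $\Phi^{\prime}$ being bounded). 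These two ingredients reduce all three statements to the single master inequality $\int_{0}^{\sigma} u^{\ast}(t)\,\di t \leq \int_{0}^{\sigma} v^{\ast}(t)\,\di t$ for every $\sigma > 0$.

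For (a) $\Rightarrow$ (b) I first apply the Hardy--Littlewood inequality \eqref{hl} to obtain $\int_{\Omega} u\varphi\,\di x \leq \int_{0}^{|\Omega|} u^{\ast}(r)\varphi^{\ast}(r)\,\di r$, and then invoke Hardy's lemma with the nonincreasing weight $\varphi^{\ast}$ to swap $u^{\ast}$ for $v^{\ast}$ under the integral sign. Hardy's lemma itself is a one-line layer-cake argument: write $\varphi^{\ast}(r) = \int_{0}^{\infty}\chi_{\{\varphi^{\ast}>s\}}(r)\,\di s$, apply the assumed inequality at each level $\sigma = \mu_{\varphi}(s)$, and integrate in $s$. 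For the converse (b) $\Rightarrow$ (a), I test (b) with $\varphi = \chi_{A}$ for a set $A\subset\Omega$ of measure $\sigma$; then $\varphi^{\ast}=\chi_{[0,\sigma]}$ and the bathtub identity $\sup_{|A|=\sigma}\int_{A} u\,\di x = \int_{0}^{\sigma} u^{\ast}(t)\,\di t$ immediately yields (a).

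For (a) $\Leftrightarrow$ (c) the pivotal observation is the Legendre-type identity
\[
\int_{\Omega}(u-\lambda)_{+}\,\di x = \int_{0}^{|\Omega|}\bigl(u^{\ast}(t)-\lambda\bigr)_{+}\,\di t = \sup_{\sigma > 0}\Bigl[\,\int_{0}^{\sigma} u^{\ast}(t)\,\di t - \lambda\sigma\,\Bigr],
\]
valid for every $\lambda\geq 0$. For (a) $\Rightarrow$ (c) I substitute the truncation representation of $\Phi$ into $\int_\Omega \Phi(u)\,\di x$, use Fubini, and bound $\int_{\Omega}(u-\lambda)_{+}\,\di x \leq \int_{\Omega}(v-\lambda)_{+}\,\di x$ from the identity above combined with (a) (together with the $\sigma=|\Omega|$ special case, which controls the linear part $\Phi'(0^+)\,t$). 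Conversely, for (c) $\Rightarrow$ (a) I test (c) with the admissible function $\Phi_{\lambda}(t)=(t-\lambda)_{+}$ and invert by duality: the chain
\[
\int_{0}^{\sigma} u^{\ast}\,\di t - \lambda \sigma \;\leq\; \int_\Omega (u-\lambda)_+\,\di x \;\leq\; \int_\Omega (v-\lambda)_+\,\di x
\]
together with minimization over $\lambda \geq 0$ (the infimum on the right being attained at $\lambda = v^{\ast}(\sigma)$ and equalling $\int_{0}^{\sigma} v^{\ast}\,\di t$) recovers (a) exactly.

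The main subtlety is producing the truncation representation of $\Phi$ and verifying the Legendre-type identity for $\int(u-\lambda)_{+}\,\di x$. Both are classical: the first rests on the fact that a convex Lipschitz function has a nondecreasing bounded derivative, so $d\Phi^{\prime}$ is a finite Radon measure and Fubini is available; the second follows immediately from equimeasurability and the monotonicity of $u^{\ast}$. Beyond these two points, the argument is essentially mechanical bookkeeping around the master inequality $U\leq V$.
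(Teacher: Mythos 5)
Your proof is correct, and it is a genuine self-contained argument, whereas the paper simply states Proposition~\ref{Propconves} and cites \cite{ALT} without giving a proof. All four implications check out: the (a)$\Rightarrow$(b) step combines Hardy--Littlewood \eqref{hl} with the layer-cake form of Hardy's lemma (writing $\varphi^{\ast}(r)=\int_0^\infty \chi_{\{\varphi^{\ast}>s\}}(r)\,\di s$, so that $\{\varphi^{\ast}>s\}=[0,\mu_{\varphi}(s))$ and each level set produces an instance of the master inequality $U\le V$); the (b)$\Rightarrow$(a) step by testing with $\chi_A$ and the bathtub principle is standard; and both directions of (a)$\Leftrightarrow$(c) hinge on the identity $\int_\Omega(u-\lambda)_+\,\di x=\sup_{\sigma>0}\left[\int_0^\sigma u^\ast\,\di t-\lambda\sigma\right]$, which you verify correctly, together with the integral representation $\Phi(t)=\Phi'(0^+)\,t+\int_0^\infty(t-\lambda)_+\,\di\mu(\lambda)$ for convex Lipschitz $\Phi$ with $\Phi(0)=0$ (finiteness of $\mu=\di\Phi'$ coming from the Lipschitz bound, and $\Phi'(0^+)\ge 0$ forced by $\Phi\ge 0=\Phi(0)$). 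The one small thing worth stating explicitly in a final write-up is the $\sigma\ge|\Omega|$ boundary case in (b)$\Rightarrow$(a), which you handle implicitly by testing with $\varphi\equiv 1$; and in (c)$\Rightarrow$(a) you should note that $\lambda\ge 0$ is required for $\Phi_\lambda(0)=0$, so the infimum is taken over $\lambda\ge 0$, which is harmless since the unconstrained minimizer $v^\ast(\sigma)$ is nonnegative. Compared to the classical reference, your route via the Legendre-type duality for $\int(u-\lambda)_+$ is arguably cleaner than chasing the equivalences through the usual "$u\prec v$ iff $\int\Phi(u)\le\int\Phi(v)$ for all convex $\Phi$" machinery, because the single elementary identity does all the work in both directions.
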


\noindent From Proposition \ref{Propconves} we immediately deduce that, if $u \prec v$, then
$$
||u||_{L^p(\Omega)}\le ||v||_{L^p(\Omega)}, \qquad 1 \le p \le +\infty.
$$
Moreover, if $u,\,v\in L^{p}(\Omega)$ with $p>1$, the inequality in point $(b)$ above holds for all nonnegative $\varphi\in L^{p^{\prime}}(\Omega)$.
\\
We end this subsection by recalling the following generalization of the Riesz rearrangement inequality (see, for example, \cite[Theorem 2.2]{AL}).

\begin{proposition}
Let $F: \R^+\times \R^+ \to \R^+$ be a continuous function such that $F(0,0)=0$ and
$$
F(u_1,v_1)+F(u_2,v_2)\ge F(u_1,v_2)+F(u_2,v_1)
$$
whenever $u_2 \ge u_1>0$ and $v_2 \ge v_1>0$. Assume that $u,v$ are two nonnegative, measurable functions on $\R^N$ satisfying \eqref{uv}. Then
\begin{equation}\label{riesz}
\inte_{\R^{2N}} F(u(x),v(y))\, W(ax+by)\,\di x \di y \le \inte_{\R^{2N}} F(u^\star(x),v^\star(y))\, W(ax+by)\,\di x \di y
\end{equation}
and
$$
\int_{\R^N}F(u(x),v(x))\,\di x \le \int_{\R^N} F(u^\star(x),v^\star(x))\, \di x,
$$
for any nonnegative $W \in L^1(\R^N)$ and any choice of nonzero numbers $a,b$.
\end{proposition}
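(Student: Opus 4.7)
The strategy I would adopt is a two-layer reduction: first approximate the supermodular function $F$ by a smooth canonical variant, and then apply the classical Riesz (respectively Hardy--Littlewood) rearrangement inequality to characteristic functions of superlevel sets.

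First I would mollify $F$ so as to assume $F\in C^{2}(\R^{+}\times\R^{+})$ with preserved supermodularity, and then decompose $F(a,b)=F(a,0)+F(0,b)+G(a,b)$ with $G(a,b):=F(a,b)-F(a,0)-F(0,b)$. The function $G$ vanishes on both axes, satisfies $G(0,0)=0$, and remains supermodular because the subtracted part is separable. Substituting into $\inte_{\R^{2N}}F(u(x),v(y))\,W(ax+by)\,\di x\di y$, the two boundary contributions decouple after integrating $y$ (resp.\ $x$): a change of variable $z=ax+by$ gives $\int_{\R^{N}}W(ax+by)\,\di y=|b|^{-N}\|W\|_{L^{1}(\R^{N})}$, independent of $x$, and by equimeasurability $\int F(u(x),0)\,\di x=\int F(u^{\star}(x),0)\,\di x$. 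Both boundary terms are therefore invariant under $u\mapsto u^{\star}$, $v\mapsto v^{\star}$, and it suffices to prove the inequality for $G$ in place of $F$.

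For the smooth $G$ that vanishes on the axes, supermodularity is equivalent to $\partial_{a}\partial_{b}G\ge 0$, which yields the layer-cake representation
\[
G(a,b)=\int_{0}^{\infty}\!\!\int_{0}^{\infty}\chi_{\{s<a\}}\,\chi_{\{t<b\}}\,\di\mu(s,t),\qquad \di\mu(s,t):=\partial_{s}\partial_{t}G(s,t)\,\di s\,\di t\ge 0.
\]
Inserting this and invoking Fubini, the left-hand side of \eqref{riesz} becomes
\[
\int_{0}^{\infty}\!\!\int_{0}^{\infty}\Big[\inte_{\R^{2N}}\chi_{\{u>s\}}(x)\,\chi_{\{v>t\}}(y)\,W(ax+by)\,\di x\,\di y\Big]\,\di\mu(s,t).
\]
After the affine change of variables $x'=ax$, $y'=by$ (which commutes with Schwarz rearrangement up to the obvious rescaling), the inner bracket has the classical three-function Riesz form, and the Riesz--Sobolev rearrangement inequality yields the pointwise-in-$(s,t)$ bound with $u^{\star},v^{\star}$ in place of $u,v$. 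Integrating against $\di\mu\ge 0$ and passing to the limit in the mollification of $F$ by dominated convergence concludes \eqref{riesz}.

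The second inequality follows from the same scheme: substituting the layer-cake representation into $\int_{\R^{N}}F(u(x),v(x))\,\di x$ reduces matters to the pointwise bound
\[
|\{u>s\}\cap\{v>t\}|=\int_{\R^{N}}\chi_{\{u>s\}}\chi_{\{v>t\}}\,\di x\leq\int_{\R^{N}}\chi_{\{u^{\star}>s\}}\chi_{\{v^{\star}>t\}}\,\di x=|\{u^{\star}>s\}\cap\{v^{\star}>t\}|,
\]
which is a special case of the Hardy--Littlewood inequality \eqref{hl}: the superlevel sets of $u^{\star}$ and $v^{\star}$ are concentric balls and hence maximally overlapping. The principal technical obstacle I foresee is controlling the mollification of $F$ so that supermodularity, the normalization $F(0,0)=0$, and continuity across the axes all remain compatible with the limiting procedure; truncating $F\wedge M$ and mollifying on a bounded rectangle, together with approximating $u,v$ by bounded compactly supported functions, should render this step routine via monotone/dominated convergence.
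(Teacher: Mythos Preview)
The paper does not prove this proposition; it merely recalls it with a citation to \cite[Theorem~2.2]{AL}. So there is no in-paper proof to compare against. Your outline---stripping off the separable boundary terms $F(a,0)+F(0,b)$, using supermodularity of the remainder $G$ to write it as a nonnegative superposition of products of indicators via the layer-cake formula, and then invoking the three-function Riesz--Sobolev inequality on characteristic functions---is the standard route to results of this kind and is essentially sound.

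One point deserves care, however. The classical Riesz--Sobolev inequality rearranges \emph{all three} factors, so your inner-bracket estimate actually yields
\[
\inte_{\R^{2N}}\chi_{\{u>s\}}(x)\,\chi_{\{v>t\}}(y)\,W(ax+by)\,\di x\,\di y
\le
\inte_{\R^{2N}}\chi_{\{u^{\star}>s\}}(x)\,\chi_{\{v^{\star}>t\}}(y)\,W^{\star}(ax+by)\,\di x\,\di y,
\]
with $W^{\star}$ rather than $W$ on the right. Your argument therefore proves \eqref{riesz} with $W^{\star}$ in place of $W$; for general nonnegative $W\in L^{1}(\R^{N})$ the inequality as literally written can fail (take $F(a,b)=ab$, $a=b=1$, $u=v$ the characteristic function of an off-center ball, and $W$ the characteristic function of a set containing the Minkowski sum of that ball with itself but disjoint from a neighborhood of the origin). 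This discrepancy is harmless for the paper's purposes, since the only place \eqref{riesz} is invoked is with the Gaussian $W_{\alpha}(x)=e^{-\alpha|x|^{2}}$, for which $W_{\alpha}^{\star}=W_{\alpha}$. Your treatment of the second displayed inequality via Hardy--Littlewood on superlevel sets is correct as stated.
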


\vspace{0.3cm}

\subsection{Two fundamental lemmata} 

We start by proving the following result, which will be fundamental in the crucial maximum principle arguments established in the proofs of Theorem \ref{mainTheorem1}  and Theorem \ref{mainTheorem2}. It is based on a technique introduced in \cite[Theorem 1]{AVV} and subsequently used in \cite[Theorem 3.2]{VV1}.

\begin{lem}\label{MaxMin}
Let $u,v$ be two nonnegative, continuous functions on $[0,R]$. Let us define
\begin{equation*}\label{defH}
H_u(r)=\int_0^ru(\rho)\rho^{N-1}\,\di \rho\quad\quad\quad H_v(r)=\int_0^rv(\rho)\rho^{N-1}\,\di\rho
\end{equation*}
and 
\begin{equation*}\label{defK}
K_u(r)=\int_r^R u(\rho)\rho^{N-1}\,\di\rho\quad\quad\quad K_v(r)=\int_r^Rv(\rho)\rho^{N-1}\,\di\rho.
\end{equation*}
Assume that $H_u(r)-H_v(r)$ admits a positive maximum point at $\bar r >0$, that is, 
\begin{equation*}\label{massimoH}
0<H_u(\bar{r})-H_v(\bar{r})=\max_{r\in [0,R]}(H_u(r)-H_v(r)).
\end{equation*}
Then, if $h\not\equiv 0$ is a positive, increasing bounded function on $(0,R)$, we have
\begin{equation}\label{increasing}
\int_0^{\bar{r}}u(\rho)h(\rho)\rho^{N-1}\,\di\rho-\int_0^{\bar{r}}v(\rho)h(\rho)\rho^{N-1}\,\di\rho>0.
\end{equation}
Analogously, assume that $K_u(r)-K_v(r)$ admits a negative minimum point at $\bar{r}<R$, that is,
\begin{equation*}\label{minimoK}
0>K_u(\bar{r})-K_v(\bar{r})=\min_{r\in [0,R]}(K_u(r)-K_v(r)).
\end{equation*}
Then, if $h\not\equiv 0$ is a positive, decreasing bounded function on $(0,R)$, we have
\begin{equation}\label{decreasing}
\int_{\bar{r}}^{R}u(\rho)h(\rho)\rho^{N-1}\,\di\rho-\int_{\bar{r}}^{R}v(\rho)h(\rho)\rho^{N-1}\,\di\rho<0.
\end{equation}
\end{lem}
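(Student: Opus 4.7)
The plan is to reduce both claims to a single Fubini-type identity in which the monotonicity of $h$ together with the extremality of $H_u - H_v$, respectively $K_u - K_v$, make the sign of every summand transparent.

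For the first statement, set $\phi(r) := H_u(r) - H_v(r)$, so that $\phi$ is absolutely continuous on $[0, R]$ with $\phi(0) = 0$, $\phi'(r) = (u(r) - v(r))r^{N-1}$ almost everywhere, and $\phi(\bar r) = \max_{[0,R]}\phi > 0$ by hypothesis. The left-hand side of \eqref{increasing} equals $\int_0^{\bar r} h(\rho)\phi'(\rho)\,\di\rho$. Writing $h(\rho) = h(0^+) + \int_{(0, \rho]}\di h(\sigma)$, with $\di h$ the nonnegative Stieltjes measure associated with the bounded nondecreasing function $h$, and swapping the order of integration via Fubini, one obtains the identity
$$
\int_0^{\bar r}h(\rho)\phi'(\rho)\,\di\rho \;=\; h(0^+)\,\phi(\bar r) \;+\; \int_{(0, \bar r]}\bigl(\phi(\bar r) - \phi(\sigma)\bigr)\,\di h(\sigma).
$$
Both summands on the right are nonnegative: $h(0^+) \geq 0$ and $\phi(\bar r) > 0$ for the boundary term; $\phi(\bar r) - \phi(\sigma) \geq 0$ (extremality at $\bar r$) and $\di h \geq 0$ (monotonicity of $h$) for the Stieltjes integral.

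To upgrade to strict positivity, I would split on the value of $h(0^+)$. If $h(0^+) > 0$, the boundary term alone is strictly positive. If $h(0^+) = 0$, then the positivity of $h$ on $(0,R)$ gives $\di h\bigl((0, \rho_0]\bigr) = h(\rho_0) - h(0^+) > 0$ for every $\rho_0 \in (0, \bar r)$; by continuity of $\phi$ and $\phi(0) = 0 < \phi(\bar r)$, I can choose such a $\rho_0$ so small that $\phi(\bar r) - \phi(\sigma) \geq \phi(\bar r)/2$ on $(0, \rho_0]$, and the Stieltjes integral is bounded below by a strictly positive multiple of $h(\rho_0)$.

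The proof of \eqref{decreasing} is entirely symmetric: with $\psi(r) := K_u(r) - K_v(r)$ one has $\psi(R) = 0$, $\psi'(r) = -(u(r) - v(r))r^{N-1}$ a.e., and $\psi(\bar r) = \min_{[0,R]}\psi < 0$. The analogous Fubini identity on $[\bar r, R]$ reads
$$
\int_{\bar r}^R h(\rho)(u-v)(\rho)\rho^{N-1}\,\di\rho \;=\; h(R^-)\,\psi(\bar r) \;-\; \int_{[\bar r, R)}\bigl(\psi(\sigma) - \psi(\bar r)\bigr)\,(-\di h)(\sigma),
$$
with $-\di h \geq 0$ (since $h$ is decreasing), $\psi - \psi(\bar r) \geq 0$ (minimality), and $h(R^-)\psi(\bar r) \leq 0$. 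The mirror case split--either $h(R^-) > 0$, or else strict positivity of $h$ on $(0,R)$ combined with $\psi(R) = 0 > \psi(\bar r)$ forces $-\di h$ to have strictly positive mass on an interval $(R - \eps, R)$ on which $\psi - \psi(\bar r) \geq |\psi(\bar r)|/2$--delivers the required strict negativity.

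The only genuine technicality will be the bookkeeping for the one-sided limits $h(0^+)$ and $h(R^-)$, which exist in $[0, \infty)$ by monotonicity and boundedness but are allowed to vanish; the case splits above are precisely designed to neutralize this degeneracy. Apart from that, the whole argument amounts to a single Fubini exchange.
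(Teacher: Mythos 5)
Your proof is correct and follows essentially the same route as the paper: the paper performs the same integration by parts, writing the difference as $h(0)\bigl(H_u(\bar r)-H_v(\bar r)\bigr)+\int_0^{\bar r}\bigl[(H_u(\bar r)-H_v(\bar r))-(H_u(\rho)-H_v(\rho))\bigr]\,\di h(\rho)$ (and the mirror formula for $K$), which is exactly your Fubini identity. Your only addition is the explicit case split on $h(0^+)$ (resp.\ $h(R^-)$) to justify the strict inequality when the endpoint limit vanishes, a point the paper passes over silently; that extra care is harmless and does not change the argument.
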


\proof
It is enough to observe that \eqref{increasing} is an immediate consequence of the following integration by parts
\begin{eqnarray*}
&&\int_{0}^{\bar {r}}u(\rho)h(\rho)\rho^{N-1}\,\di\rho- \int_{0}^{\bar {r}}v(\rho)h(\rho)\rho^{N-1}\,\di\rho
\\
&&=h(0)\big(H_u(\bar {r})-H_v(\bar {r})\big)
+\int_{0}^{\bar {r}}\Big[\big(H_u(\bar{r})-H_v(\bar{r})\big)-\big(H_u(\rho)-H_v(\rho)\big)\Big]\,\di h(\rho)>0.
\end{eqnarray*}
Analogously, concerning \eqref{decreasing}, we have
\begin{eqnarray*}
&&\int_{\bar{r}}^{R}u(\rho)h(\rho)\rho^{N-1}\,\di\rho-\int_{\bar{r}}^{R}v(\rho)h(\rho)\rho^{N-1}\,\di\rho
\\
&&=h(R)\big(K_u(\bar{r})-K_v(\bar{r})\big)
-\int_{\bar{r}}^{R}\Big[\big(K_u(\bar {r})-K_v(\bar{r})\big)-\big(K_u(\rho)-K_v(\rho)\big)\Big]\,\di h(\rho)<0.
\end{eqnarray*}
\endproof

\begin{remark}\label{hk1}
We explicitly observe that, reasoning as in the proof of Lemma \ref{MaxMin}, we can prove that, if $\max_{r\in [0,R]}(H_u(r)-H_v(r))=0$, then 
\begin{equation*} 
\int_0^{\bar{r}}u(\rho)h(\rho)\rho^{N-1}\,\di\rho-\int_0^{\bar{r}}v(\rho)h(\rho)\rho^{N-1}\,\di\rho\ge 0.
\end{equation*}
Analogously, if $\min_{r\in [0,R]}(K_u(r)-K_v(r))=0$, then
\begin{equation*} 
\int_{\bar{r}}^{R}u(\rho)h(\rho)\rho^{N-1}\,\di\rho-\int_{\bar{r}}^{R}v(\rho)h(\rho)\rho^{N-1}\,\di\rho\le0.
\end{equation*} 
\end{remark}

\begin{remark}\label{hk}
If $\bar r\in (0,R)$ is a maximum point for $H_u(r)-H_v(r)$, then $\bar r$ is a non-positive minimum point for $K_u(r)-K_v(r)$. 
Indeed, it is easy to see that
\begin{eqnarray*}
K_u(r)-K_v(r)&=&\left(H_u(R)-H_v(R)\right)-\left(H_u(r)-H_v(r)\right)
\\
&\ge&\left(H_u(R)-H_v(R)\right)-\left(H_u(\bar r)-H_v(\bar r)\right)=K_u(\bar r)-K_v(\bar r). 
\end{eqnarray*}
Being $\bar{r}$ a maximum point for $H_u(r)-H_v(r)$ we have $K_u(\bar r)-K_v(\bar r)\leq 0$ and, by the above inequality,
$$\min_{r\in[0,R]}\left(K_u(r)-K_v(r)\right)=K_u(\bar r)-K_v(\bar r)\leq 0.$$

\end{remark}

\begin{lemma}\label{lemmaab}
Let $\gamma>0$. Then, for every $a,b>0$, we have
\begin{equation}\label{ab}
\frac{1}{a^\gamma}-\frac{1}{b^\gamma}\le \frac{\gamma}{a^{\gamma+1}}(b-a).
\end{equation}
\end{lemma}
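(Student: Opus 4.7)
The inequality is the tangent-line supporting inequality for the strictly convex function $\phi(x)=x^{-\gamma}$ on $(0,\infty)$. My plan is to verify convexity and then apply the standard tangent-line bound at the point $a$.

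First, I would compute $\phi'(x)=-\gamma x^{-\gamma-1}$ and $\phi''(x)=\gamma(\gamma+1)x^{-\gamma-2}$, which is strictly positive on $(0,\infty)$ for every $\gamma>0$. Hence $\phi$ is convex on $(0,\infty)$, so for any $a,b>0$ the inequality
\[
\phi(b)\ge \phi(a)+\phi'(a)(b-a)
\]
holds. Substituting the explicit expressions, this reads
\[
\frac{1}{b^{\gamma}}\ge \frac{1}{a^{\gamma}}-\frac{\gamma}{a^{\gamma+1}}(b-a),
\]
and rearranging yields exactly \eqref{ab}.

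There is essentially no obstacle here, since the statement is a one-line consequence of convexity; the only thing to check is the sign of $\phi''$. If one preferred an argument that does not invoke convexity by name, the same conclusion follows by fixing $a>0$ and studying $g(b):=\frac{1}{a^{\gamma}}-\frac{1}{b^{\gamma}}-\frac{\gamma}{a^{\gamma+1}}(b-a)$: one has $g(a)=0$ and $g'(b)=\gamma b^{-\gamma-1}-\gamma a^{-\gamma-1}$, which is positive for $b<a$ and negative for $b>a$, so $b=a$ is a global maximum and $g(b)\le 0$ for all $b>0$, giving the claim.
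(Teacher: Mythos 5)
Your proof is correct. It differs in presentation from the paper's: the paper homogenizes the inequality to a single variable by introducing $g(t)=t^{\gamma}+\frac{\gamma}{t}$, shows $\min_{t>0}g(t)=g(1)=\gamma+1$, and then substitutes $t=a/b$, whereas you identify \eqref{ab} directly as the tangent-line (first-order) inequality for the convex function $\phi(x)=x^{-\gamma}$ at the point $a$. The two routes are mathematically equivalent — the paper's minimization of $g$ is just a normalized version of the same convexity fact — but yours makes the structural reason for the inequality more transparent, while the paper's is a self-contained one-variable calculus computation that avoids invoking convexity as a concept. Your alternative derivative argument in the last paragraph is in fact closer in spirit to the paper's calculation, just carried out in $b$ for fixed $a$ rather than in the normalized ratio $t=a/b$. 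Either way, the claim is established.
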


\proof
It is immediate to check that, setting $g(t)=t^\gamma+\frac \gamma t$, 
$$
\min_{t>0}g(t)=g(1)=\gamma+1.
$$
Choosing $t=\frac{a}{b}$ we get the claim.
\endproof


\subsection{A key function}

We end this section by discussing some properties of the function
$$
\Theta_{N,s}(r,\rho)=\frac{1}{N\omega_N} \int_{|x'|=1}\left(\int_{|y'|=1}\frac{1}{|r\,x'-\rho\, y'|^{N+2s}}\, \di \mathcal{H}^{N-1}(y')\right)\di \mathcal{H}^{N-1}(x')
$$
defined for $r,\rho>0$.
We observe that the internal integral does not depend on $x'$. So we can compute it by choosing any fixed $x'$ and we obtain
\begin{eqnarray}
\Theta_{N,s}(r,\rho)&=&\int_{|y'|=1}\frac{1}{|r\,x'-\rho\, y'|^{N+2s}}\, \di \mathcal{H}^{N-1}(y') \notag
\\
&=& \frac{2\pi^{\frac{N-1}{2}}}{\Gamma\left(\frac{N-1}{2}\right)}\int_0^\pi \frac{\sin^{N-2}\theta}{(r^2-2r\rho\cos \theta+\rho^2)^{\frac{N+2s}{2}}}\,\di \theta. \label{theta}
\end{eqnarray}
Identity \eqref{theta} immediately infers that $\Theta_{N,s}(r,\rho)$ is symmetric, that is 
$$
\Theta_{N,s}(r,\rho)=\Theta_{N,s}(\rho,r), \qquad r,\rho >0.
$$
Moreover,
\begin{equation}\label{hyper}
\Theta_{N,s}(r,\rho)=\left\{
\begin{array}{ll}
\dfrac{2\pi^{\frac{N-1}{2}}}{\Gamma\left(\frac{N-1}{2}\right)}\rho^{-N-2s}  {}_{2}F_1\left(\frac{N+2s}{2},s+1;\frac N 2; \frac{r^2}{\rho^2}\right) \quad & \mbox{if } 0 \le r<\rho<+\infty
\\ \\
\dfrac{2\pi^{\frac{N-1}{2}}}{\Gamma\left(\frac{N-1}{2}\right)}r^{-N-2s} {}_2F_1\left(\frac{N+2s}{2},s+1;\frac N 2; \frac{\rho^2}{r^2}\right) & \mbox{if } 0 \le \rho<r<+\infty,
\end{array}
\right.
\end{equation}
where ${}_2 F_1 (a,b;c;x)$ is the hypergeometric function (see, for example, \cite[Ch. 9]{L}) defined by
$$
{}_2 F_1 (a,b;c;x)=\frac{\Gamma(c)}{\Gamma(b)\Gamma(c-b)}\int_0^1 \tau^{b-1}(1-\tau)^{c-b-1}(1-x\tau)^{-a}\,\di \tau, \quad c>b>0, 0<\tau<1.
$$
It is well-known that
$$
{}_2 F_1' (a,b;c;x)=\frac{ab}{c}{}_2 F_1 (a+1,b+1;c+1;x).
$$
Hence we immediately get that, 
 if $\bar r>0$, $\Theta_{N,s}(r,\rho)$ is  increasing with respect to $r\in [0,\bar r]$ for any fixed $\rho>\bar r$, while it is decreasing with respect to $\rho >\bar r$ for any fixed $r \in [0,\bar r]$.

\noindent Finally, using \eqref{hyper}, we have the following asymptotic behaviors: 
\begin{equation}\label{asym}
\Theta_{N,s}(r,\rho) \sim \frac{1}{|r-\rho|^{1+2s}} \qquad \mbox{as } |r-\rho|\to 0.
\end{equation}
and
\begin{equation}\label{asym1}
\Theta_{N,s}(r,\rho) \sim \frac{1}{r^{N+2s}} \quad \mbox{as } r \to +\infty, \quad \Theta_{N,s}(r,\rho) \sim \frac{1}{\rho^{N+2s}} \quad \mbox{as } \rho \to +\infty.
\end{equation}


\section{Proof of Theorem \ref{mainTheorem1}}

Before proving our main result we need to specify the notion of solution to problem \eqref{P}. Note that, due to the lack of regularity of solutions near the boundary, the notion of solution has to be understood in the weak distributional meaning, for test functions compactly supported in the domain. Furthermore, the nonlocal nature of the operator has to be taken into account.\\
We will adopt the following notion of solution contained in  \cite{CMSS}.

\begin{definition}
We say that a positive function  $u\in H^s_{\loc}(\Omega)\cap L^1(\Omega)$ is a weak solution to problem \eqref{P} if 
$$u^{\max\{\frac{\gamma+1}{2},1\}}\in X_0^s(\Omega),\quad\quad \frac{f}{u^{\gamma}}\in L^1_\loc(\Omega),$$
and, for every nonnegative $\varphi\in C_c^{\infty}(\Omega)$, we have
\begin{equation*}\label{energySol}
\frac{\gamma(N,s)}{2}\inte_{\R^{2N}}\frac{(u(x)-u(y))(\varphi(x)-\varphi(y))}{|x-y|^{N+2s}}\,\di x\di y=\int_{\Omega}\frac{f(x)}{u(x)^{\gamma}}\varphi(x)\,\di x,
\end{equation*}
with $\gamma(N,s)$ defined in \eqref{gamma}.
\end{definition}
In \cite[Theorem 1.2]{CMSS} (see also \cite{BDMP}), the authors prove the existence of a weak solution to problem \eqref{P} with $\essinf_K u>0$ for every compact set $K \subset \subset \Omega$, distinguishing two cases according to the value of $\gamma$: 1) (mildly singular)  when $0<\gamma\le 1$ and $f \in L^p(\Omega)$, then there exists a solution $u \in X_0^s(\Omega)$; 2) (strongly singular) when $\gamma>1$ and $f \in L^1(\Omega)$, then there exists a solution $u \in H^s_\loc(\Omega)\cap L^1(\Omega)$ such that $u^{\frac{\gamma+1}{2}}\in X_0^s(\Omega)$. In the same paper the authors also discuss the uniqueness of such solutions. Since the way of understanding the boundary condition is not unambiguous, they start with the following:

\begin{definition} Let $u$ be such that $u =0$ in $\R^N\setminus \Omega$. We say that $u \le 0$ on $\partial \Omega$ if, for every $\varepsilon >0$, it follows that
$$
(u-\varepsilon)_+\in X_0^s(\Omega).
$$
We will say that $u=0$ on $\partial \Omega$ if $ u$ is nonnegative and $ u \le 0$ on $\partial \Omega$.
\end{definition}
\noindent Adopting such a definition, in \cite[Theorem 1.4]{CMSS} the authors also show if $\gamma>0$ and $f \in L^1(\Omega)$, there exists at most one weak solution to problem \eqref{P}.

\noindent We can finally prove Theorem \ref{mainTheorem1}.\medskip

\noindent{\sc Proof of Theorem \ref{mainTheorem1}.}\\
We split the proof into different steps. \\

\vspace{0.2cm}
\noindent \emph{ Step 1. Approximating problems}

\smallskip

For every $k\in \mathbb{N}$ let us define 
$$f_k:=\min\left\{f(x),k\right\}$$
and let us consider the following sequence of nonsingular approximating problems 
\begin{equation}\label{pk}
\left\{
\begin{array}{ll}
(-\Delta)^s u_k =\dfrac{f_k}{(u_k+\frac{1}{k})^{\gamma}}\qquad& \mbox{in }\Omega
\\
u_k>0& \mbox{in }\Omega
\\
u_k=0& \mbox{on }\mathbb{R}^N\setminus\Omega.
\end{array}
\right.
\end{equation}
For every $k \in \N$ problem \eqref{pk} has a nonnegative solution belonging to $X_0^s(\Omega)\cap L^{\infty}(\Omega)$ (see \cite[Lemma 3.1]{BDMP}), which means that 
\begin{equation}\label{weakapprox}
\frac{\gamma(N,s)}{2}\inte_Q\frac{(u_k(x)-u_k(y))(\varphi(x)-\varphi(y))}{|x-y|^{N+2s}}\,\di x\di y=\int_{\Omega}\frac{f_k(x)}{\left(u_k(x)+\frac{1}{k}\right)^{\gamma}}\varphi(x)\,\di x
\end{equation}
for every $\varphi\in X_0^s(\Omega)$. Moreover, the sequence $u_k$ is increasing, $u_k>0$ in $\Omega$, and, for every subset $\omega \subset\subset \Omega$, there exists a positive constant $c_\omega$, independent of $k$, such that $u_k (x)\ge c_\omega>0$ for every $x \in \omega$ and $k \in \N$ (see \cite[Lemma 3.2]{BDMP}).

\vspace{0.2cm}

\noindent \emph{Step 2. Reduction to the radial case}

\smallskip
\noindent Let $0\leq t<||u_k||_{L^\infty(\Omega)}$ and $h>0$. We consider the following test function 
$$\varphi(x)=\mathcal{G}_{t,h}(u_k(x)),$$ 
where $\mathcal{G}_{t,h}(\theta)$ is defined as follows:
$$
\mathcal{G}_{t,h}(\theta)=\begin{cases}
h \qquad & \mbox{if } \theta>t+h\\
\theta-t & \mbox{if } t<\theta\leq t+h\\
0  & \mbox{if } \theta\leq t.
\end{cases}
$$

\noindent We explicitly observe that $\mathcal{G}_{t,h}(\theta)\in X_0^s(\Omega)$, so we can use it in the weak formulation of solution \eqref{weakapprox}, obtaining

\begin{eqnarray}\label{b}
&&\frac{\gamma(N,s)}{2}\inte_{\R^{2N}}\frac{[u_k(x)-u_k(y)]\left[\mathcal{G}_{t,h}(u_k(x))-\mathcal{G}_{t,h}(u_k(y))\right]}{|x-y|^{N+2s}}\,\di x\di y
\\
&&\hskip 7cm =\int_{\Omega}\frac{f_k(x)}{\left(u_k(x)+\frac{1}{k}\right)^{\gamma}}\mathcal{G}_{t,h}(u_k(x))\,\di x. \notag
\end{eqnarray}
We first deal with the left-hand side in \eqref{b}. All the arguments are contained in the proof of Theorem 3.1 in \cite{FV}, but we summarize them here for the reader's convenience.

\noindent We start by writing 
\begin{equation*}
\inte_{\R^{2N}}\frac{[u_k(x)-u_k(y)]\left[\mathcal{G}_{t,h}(u_k(x))-\mathcal{G}_{t,h}(u_k(y))\right]}{|x-y|^{N+2s}}\,\di x\di y
=\frac{1}{\Gamma\left(\frac{N+2s}{2}\right)}\int_0^{\infty} \mathcal{I}_{\alpha}[u_k,t,h]\alpha^{\frac{N+2s}{2}-1}\,\di\alpha,
\end{equation*}
where
$$
 \mathcal{I}_{\alpha}[u_k,t,h]=\inte_{\R^{2N}}\left[u_k(x)-u_k(y)\right]\left[\mathcal{G}_{t,h}(u_k(x) -\mathcal{G}_{t,h}(u_k(y) )\right]e^{-\alpha|x-y|^2}\,\di x\di y.
 $$
 
 \noindent Riesz inequality \eqref{riesz}, with the choices
 $$F(u_k, v_k)=u_k^2+v_k^2-(u_k-v_k)\left(\mathcal{G}_{t,h}(u_k)-\mathcal{G}_{t,h}(v_k)\right), \> W_{\alpha}(x)=e^{-\alpha |x|^2}, \>a=1, \> b=-1,$$
implies
 $$ \inte_{\R^{2N}} F(u_k(x),u_k(y))W_{\alpha}(x-y)\,\di x\di y\leq  \inte_{\R^{2N}} F(u_k^\star(x) ,u_k^\star(y))W_{\alpha}(x-y)\,\di x\di y,$$
 which immediately gives
$$ \mathcal{I}_{\alpha}[u_k,t,h]\geq  \mathcal{I}_{\alpha}[u_k^\star,t,h].$$
Hence
\begin{eqnarray}\label{b2}
&&\inte_{\R^{2N}}\frac{[u_k(x)-u_k(y)]\left[\mathcal{G}_{t,h}(u_k(x))-\mathcal{G}_{t,h}(u_k(y))\right]}{|x-y|^{N+2s}}\,\di x\di y 
\\
&&\hskip 5cm \ge \inte_{\R^{2N}}\frac{\left[u_k^\star(x)-u_k^\star(y)\right]\left[\mathcal{G}_{t,h}(u_k^\star(x))-\mathcal{G}_{t,h}(u_k^\star(y))\right]}{|x-y|^{N+2s}}\,\di x\di y. \notag 
\end{eqnarray}

\noindent In order to simplify the notation, from now on $\mathfrc{u}_k(x)=\mathfrc{u}_k(|x|)$ will stand for $u_k^\star(x)$.
We change the variables in the right-hand side of \eqref{b2} and we obtain
\begin{eqnarray}\label{b3}
&&\qquad\inte_{\R^{2N}}\frac{\left[u_k^\star(x)-u_k^\star(y)\right]\left[\mathcal{G}_{t,h}(u_k^\star(x))-\mathcal{G}_{t,h}(u_k^\star(y))\right]}{|x-y|^{N+2s}}\,\di x\di y
\\
&& =N \omega_N\int_0^{+\infty}\left(\int_0^{+\infty}[\mathfrc{u}_{k}(r)-\mathfrc{u}_{k}(\rho)]\left[\mathcal{G}_{t,h}(\mathfrc{u}_{k}(r))-\mathcal{G}_{t,h}(\mathfrc{u}_{k}(\rho))\right]\Theta_{N,s}(r,\rho)\rho^{N-1}\,\di \rho\right)r^{N-1}\,\di r,\notag
\end{eqnarray}
where $\Theta_{N,s}(r,\rho)$ is the function defined in \eqref{theta}.

\noindent We split the integral in the right-hand side of \eqref{b3} into the sum
$$\mathcal{I}^1+2\mathcal{I}^2+2\mathcal{I}^3+2h\mathcal{I}^4,$$
where
\begin{eqnarray*} 
\mathcal{I}^1&=&\int_{r(t+h)}^{r(t)}\left(\int_{r(t+h)}^{r(t)}(\mathfrc{u}_{k}(r)-\mathfrc{u}_{k}(\rho))^2\Theta_{N,s}(r,\rho)\rho^{N-1}\,\di \rho\right)r^{N-1}\,\di r,
\\
\mathcal{I}^2&=&\int_{0}^{r(t+h)}\left(\int_{r(t+h)}^{r(t)}(\mathfrc{u}_{k}(r)-\mathfrc{u}_{k}(\rho))(h-\mathfrc{u}_{k}(\rho)+t)\Theta_{N,s}(r,\rho)\rho^{N-1}\,\di \rho\right)r^{N-1}\,\di r,
\\
\mathcal{I}^3&=&\int_{r(t)}^{+\infty}\left(\int_{r(t+h)}^{r(t)}(\mathfrc{u}_{k}(r)-\mathfrc{u}_{k}(\rho))(-\mathfrc{u}_{k}(\rho)+t)\Theta_{N,s}(r,\rho)\rho^{N-1}\,\di \rho\right)r^{N-1}\,\di r,
\\
\mathcal{I}^4&=&\int_{0}^{r(t+h)}\left(\int_{r(t)}^{+\infty}(\mathfrc{u}_{k}(r)-\mathfrc{u}_{k}(\rho))\Theta_{N,s}(r,\rho)\rho^{N-1}\,\di \rho\right)r^{N-1}\,\di r,
\end{eqnarray*}
with $\mathfrc{u}_{k}(r(t))=t$ and $\mathfrc{u}_{k}(r(t+h))=t+h$.

\noindent Concerning the integral $\mathcal{I}^1$ we observe that, since $\mathfrc{u}$ is decreasing along the radii, we get
$$|\mathfrc{u}_{k}(r)-\mathfrc{u}_{k}(\rho)|\leq \mathfrc{u}_{k}(r(t+h))-\mathfrc{u}_{k}(r(t))=h.$$
Recalling the asymptotic behavior of  $\Theta_{N,s}(r,\rho)$ as $|\rho-r|\rightarrow 0$ given in \eqref{asym}, and that $\mathfrc{u}_{k}$ is $C^{s}(\R^{N})$ (see \cite[Proposition 1.1]{RS0})
the integral $\mathcal{I}^1$ can be estimated in the following way
$$\mathcal{I}^1\leq C h \int_{r(t+h)}^{r(t)}\left(\int_{r(t+h)}^{r(t)}|r-\rho|^{-1-s}\rho^{N-1}\,\di \rho\right)r^{N-1}\,\di r,$$
being $C$ a positive constant.
It follows that
\begin{equation}\label{l1}
\frac{\mathcal{I}^1}{h}\rightarrow 0,\quad\mbox{as}\,\,h\rightarrow 0^+.
\end{equation}
Similarly, we have

$$\mathcal{I}^2\leq  C h \int_{0}^{r(t+h)}\left(\int_{r(t+h)}^{r(t)}|\rho-r|^{-1-s}\rho^{N-1}\,\di \rho\right)r^{N-1}\,\di r,$$
which implies that
\begin{equation}\label{l2}
\frac{\mathcal{I}^2}{h}\rightarrow 0,\quad\mbox{as}\,\,h\rightarrow 0^+.
\end{equation}
We now consider the integral $\mathcal{I}^3$ and we observe that
\begin{eqnarray*}
\mathcal{I}^3&\le& \int_{r(t)}^{R}\left(\int_{r(t+h)}^{r(t)}|t-\mathfrc{u}_{k}(\rho)||r-\rho|^{-1-s}\rho^{N-1}\,\di \rho\right)r^{N-1}\,\di r
\\
&&+\, \int_{R}^{+\infty}\left(\int_{r(t+h)}^{r(t)}\mathfrc{u}_{k}(\rho)\,|t-\mathfrc{u}_{k}(\rho)|\Theta_{N,s}(\rho,r) \rho^{N-1}\,\di \rho\right)r^{N-1}\,\di r=\mathcal{I}^3_1+\mathcal{I}^3_2.
\end{eqnarray*}
For $\mathcal{I}^3_1$ we get
\begin{equation*}
\mathcal{I}^3_1\leq C h \int_{r(t)}^{R}\left(\int_{r(t+h)}^{r(t)}|r-\rho|^{-1-s}\,\di \rho\right)\,\di r
=Ch\left((R-r(t))^{1-s}-(R-r(t+h))^{1-s}\right),
\end{equation*}
so that 
\begin{equation}\label{l3}
\frac{\mathcal{I}_1^3}{h} \to 0 \qquad \mbox{ as } h \to 0^+.
\end{equation}
Recalling that $\Theta_{N,s}(r,\rho)$ is a symmetric function and it has the asymptotic behavior described in \eqref{asym1} as $r \to +\infty$, for $\mathcal{I}^3_2$ we have
\begin{eqnarray*}
\mathcal{I}^3_2&\le& C h \int_{R}^{+\infty}\left(\int_{r(t+h)}^{r(t)}\Theta_{N,s}(\rho,r) \rho^{N-1}\,\di \rho\right)r^{N-1}\,\di r 
\\
&=& Ch \int_{r(t+h)}^{r(t)}\left(\int_{R}^{+\infty}\frac{1}{r^{N+2s}}\, r^{N-1}\,\di r\right)\rho^{N-1}\,\di \rho,
\end{eqnarray*}
so that $\mathcal{I}_2^3$ also satisfies
\begin{equation}\label{l4}
\frac{\mathcal{I}_2^3}{h} \to 0 \qquad \mbox{ as } h \to 0^+.
\end{equation}
Gathering \eqref{l1}, \eqref{l2}, \eqref{l3} and \eqref{l4}, from \eqref{b3} we deduce

\begin{eqnarray}\label{left}
&&\lim_{h\rightarrow 0^+} \frac{1}{h}\inte_{\R^{2N}}\frac{\left[u_k^\star(x)-u_k^\star(y)\right]\left[\mathcal{G}_{t,h}(u_k^\star(x))-\mathcal{G}_{t,h}(u_k^\star(y))\right]}{|x-y|^{N+2s}}\,\di x\di y 
\\
 &&\hskip 2cm = 2 N\omega_N \int_{0}^{r(t)}\left(\int_{r(t)}^{+\infty}(\mathfrc{u}_{k}(r)-\mathfrc{u}_{k}(\rho))\Theta_{N,s}(r,\rho)\rho^{N-1}\,\di \rho\right)r^{N-1}\,\di r. \notag
\end{eqnarray}
We now focus on the right-hand side of \eqref{b}. Since
\begin{eqnarray*}
&&\int_{\Omega}\frac{f_k(x)}{\left(u_k(x)+\frac{1}{k}\right)^{\gamma}}\mathcal{G}_{t,h}(u_k(x))\,\di x
\\
&&\hskip 1cm \leq h ||f||_{L^\infty(\Omega)} \int_{u_k(x)>t+h} \frac{1}{\left(u_k(x)+\frac 1 k \right)^\gamma}\,\di x + ||f||_{L^\infty(\Omega)}\int_{t < u_k(x)\le t+h} \frac{u_k(x)-t}{\left(u_k(x)+\frac 1 k \right)^\gamma}\,\di x,
\end{eqnarray*}
we immediately get
\begin{eqnarray}\label{right}
\qquad\lim_{h \to 0^+} \frac 1 h \int_{\Omega}\frac{f_k(x)}{\left(u_k(x)+\frac{1}{k}\right)^{\gamma}}\mathcal{G}_{t,h}(u_k(x))\,\di x &\le&  ||f||_{L^\infty(\Omega)} \int_{u_k(x)>t} \frac{1}{\left(u_k(x)+\frac 1 k \right)^\gamma}\,\di x
\\
&=&N \omega_N ||f||_{L^\infty(\Omega)} \int_0^{r(t)} \frac{1}{\left(\mathfrc{u}_{k}(r)+\frac 1 k \right)^\gamma}r^{N-1}\,\di r. \notag
\end{eqnarray}
Assembling \eqref{b}, \eqref{b2}, \eqref{b3}, \eqref{left} and \eqref{right}, we finally obtain that, for $0\le t<||u_k||_{L^\infty(\Omega)}$, the following inequality holds true
\begin{eqnarray*} 
&&\gamma(N,s)\int_0^{r(t)}\left(\int_{r(t)}^{+\infty}(\mathfrc {u}_k(r)-\mathfrc{u}_k(\rho))\Theta_{N,s}(r,\rho)\rho^{N-1}\,\di \rho\right)r^{N-1}\,\di r
\\
&& \hskip 7cm \le  ||f||_{L^\infty(\Omega)}\int_0^{r(t)} \frac{1}{\left(\mathfrc{u}_k(r)+\frac{1}{k}\right)^{\gamma}}r^{N-1}\,\di r. \notag
\end{eqnarray*}
Reasoning as in \cite{FV} we can actually prove that, for every $r \ge 0$,
\begin{eqnarray} \label{b5}
&&\gamma(N,s)\int_0^{r}\left(\int_{r}^{+\infty}(\mathfrc {u}_k(\tau)-\mathfrc{u}_k(\rho))\Theta_{N,s}(\tau,\rho)\rho^{N-1}\,\di \rho\right)\tau^{N-1}\,\di \tau
\\
&& \hskip 7cm \le  ||f||_{L^\infty(\Omega)}\int_0^{r} \frac{1}{\left(\mathfrc{u}_k(\tau)+\frac{1}{k}\right)^{\gamma}}\,\tau^{N-1}\,\di \tau. \notag
\end{eqnarray}

\vskip 0.2cm

\noindent\emph{Step 3. Symmetrized approximating problems}

\smallskip

Let $v$ be the solution to the symmetrized problem \eqref{probSim}. We denote by $v_k$ the solution to the following problem 
\begin{equation*}\label{pks}
\left\{
\begin{array}{ll}
(-\Delta)^s v_k=\dfrac{||f||_{L^\infty(\Omega)}}{\left(v_k+\frac{1}{k}\right)^{\gamma}}\quad&\mbox{in}\,\Omega^\star
\\
v_k>0&\mbox{in }\Omega^\star
\\
v_k=0& \mbox{on }\mathbb{R}^N\setminus\Omega^\star.
\end{array}
\right.
\end{equation*}
Due to the radial symmetry and the radial monotonicity (see \cite{FW}), the function $v_k$ is such that $v_k(x)=v_k^\star(x)$ and, using the notation $\mathfrc{v}_k(x)=\mathfrc{v}_k(|x|)=v_k^\star(x)$, we have
\begin{eqnarray}\label{b6}
&&\gamma(N,s)\int_0^{r}\left(\int_r^{+\infty}(\mathfrc{v}_k(\tau)-\mathfrc{v}_k(\rho))\Theta_{N,s}(\tau,\rho)\rho^{N-1}\,\di \rho\right)\tau^{N-1}\,\di \tau
\\
&&\hskip 6cm = ||f||_{L^\infty(\Omega)}\int_0^{r} \frac{1}{\left(\mathfrc{v}_k(\tau)+\frac{1}{k}\right)^{\gamma}}\,\tau^{N-1}\,\di\tau. \notag
\end{eqnarray}

\vskip 0.2cm

\emph{Step 4. Comparison result}

\smallskip

Taking the difference between \eqref{b5} and \eqref{b6} we get
\begin{eqnarray}\label{abs}
&&\int_0^{r}\left(\int_r^{+\infty}\left[\left(\mathfrc{u}_k(\tau)-\mathfrc{v}_k(\tau)\right)-\left(\mathfrc{u}_k(\rho)-\mathfrc{v}_k(\rho)\right)\right]\Theta_{N,s}(\tau,\rho)\,\rho^{N-1}\,\di\rho\right)\tau^{N-1}\,\di \tau
\\
&&\hskip 5cm \le  ||f||_{L^\infty(\Omega)} \int_0^{r} \left(\frac{1}{\left(\mathfrc{u}_k(\rho)+\frac{1}{k}\right)^{\gamma}}-\frac{1}{\left(\mathfrc{v}_k(\rho)+\frac{1}{k}\right)^{\gamma}}\right)\tau^{N-1}\,\di\tau. \notag
\end{eqnarray}
We want to prove that
\begin{equation}\label{compSimgamma1}
 \int_{0}^{r}\mathfrc{u}_k(\tau)\tau^{N-1}\,\di \tau\leq \int_{0}^{r}\mathfrc{v}_k(\tau)\tau^{N-1}\,\di\tau,\qquad r\geq 0.
 \end{equation}

\noindent At this point, our approach greatly differs from the one used in  the proof of \cite[Theorem 31]{FV}, which consists in the interpretation of the LHS of \eqref{compSimgamma1} as the difference of $N+2$ dimensional fractional Laplacian of the spherical mean functions of $\mathfrc{u}_k$, $\mathfrc{v}_k$. Indeed, we use now a qualitative contradiction argument based on Lemma \ref{MaxMin}. Suppose by contradiction that the function $\dint_0^{r} \left(\mathfrc{u}_k(\tau)-\mathfrc{v}_k(\tau)\right)\tau^{N-1}\,\di\tau$ has a positive maximum point at $\bar{r}\in (0,R]$, i.e.,
\begin{equation*}\label{max>0}
0<\int_0^{\bar{r}} \left(\mathfrc{u}_k(\tau)-v_k(\tau)\right)\tau^{N-1}\,\di\tau=\max_{r\in[0,R]}\int_0^{r} \left(\mathfrc{u}_k(\tau)-\mathfrc{v}_k(\tau)\right)\tau^{N-1}\,\di\tau.
\end{equation*}
We recall that the function $\Theta_{N,s}(\tau,\rho)$ is increasing with respect to $\tau$ for any fixed $\rho>\bar r$. Hence, Lemma \ref{MaxMin} provides that, for every $\rho>\bar r$,
\begin{equation}\label{max+}
\int_0^{\bar{r}}(\mathfrc {u}_k(\tau)-\mathfrc{v}_k(\tau))\Theta_{N,s}(\tau,\rho)\tau^{N-1}\,\di\tau>0.
\end{equation}
According to what we notice in Remark \ref{hk}, if $\int_0^{r} \left(\mathfrc{u}_k(\tau)-\mathfrc{v}_k(\tau)\right)\tau^{N-1}\,\di\tau$ has a point of positive maximum at $\bar r$, then $\bar r$  is a point of non-positive minimum for $\int_r^R \left(\mathfrc{u}_k(\tau)-\mathfrc{v}_k(\tau)\right)\tau^{N-1}\,\di\tau$. Hence, using what noticed in Remark \ref{hk1} and the fact  that
 $\Theta_{N,s}(\tau,\rho)$ is decreasing with respect to $\rho$ for any fixed $\tau<\bar r$, we get that, for every $\tau<\bar{r}$,
\begin{equation}\label{minNeg}
\int_{\bar{r}}^R(\mathfrc {u}_k(\rho)-\mathfrc{v}_k(\rho))\Theta_{N,s}(\tau,\rho)\rho^{N-1}\,\di\rho \leq 0.
\end{equation}
\noindent From \eqref{max+} and \eqref{minNeg} we immediately deduce that

\begin{eqnarray}\label{twostars}
&&\int_0^{\bar{r}}\left(\int_{\bar{r}}^{+\infty}(\mathfrc {u}_k(\tau)-\mathfrc{u}_k(\rho))\Theta_{N,s}(\tau,\rho)\rho^{N-1}\,\di\rho\right)\tau^{N-1}\,\di\tau-
\\
&&\hskip 3cm \int_0^{\bar{r}}\left(\int_{\bar{r}}^{+\infty}(\mathfrc{v}_k(\tau)-\mathfrc{v}_k(\rho))\Theta_{N,s}(\tau,\rho)\rho^{N-1}\,\di\rho\right)\tau^{N-1}\,\di\tau>0.\notag
\end{eqnarray}
On the other hand, using Lemma \ref{ab} with the choice $a=\mathfrc{u}_k(\tau)+\frac 1 k$ and $b=\mathfrc{v}_k(\tau)+\frac 1 k$, we get that
$$
\int_0^{\bar r} \left(\frac{1}{\left(\mathfrc{u}_k(\tau)+\frac 1 k \right)^\gamma}-\frac{1}{\left(\mathfrc{v}_k(\tau)+\frac 1 k\right)^\gamma}\right)\,\tau^{N-1}\,\di \tau \le 
\gamma \int_0^{\bar r} \frac{1}{\left(\mathfrc{u}_k(\tau)+\frac 1 k \right)^{\gamma+1}}\left(\mathfrc{v}_k(\tau)-\mathfrc{u}_k(\tau)\right)\tau^{N-1}\,\di\tau,
$$
the last integral being negative via Lemma \ref{MaxMin} since $\frac{1}{\left(\mathfrc{u}_k(\tau)+\frac 1 k \right)^{\gamma+1}}$ is a positive, increasing function. This implies
\begin{equation}\label{star}
\int_0^{\bar r} \left(\frac{1}{\left(\mathfrc{u}_k(\tau)+\frac 1 k \right)^\gamma}-\frac{1}{\left(\mathfrc{v}_k(\tau)+\frac 1 k\right)^\gamma}\right)\,\tau^{N-1}\,\di \tau<0.
\end{equation}
Finally \eqref{twostars} and \eqref{star} contradict \eqref{abs} at $r=\bar r$.

\vskip 0.2cm

\noindent\emph{Step 5. Passing to the limit as $k \to +\infty$}

\smallskip

In \cite{BDMP} the authors prove  that the sequences $ u_k, v_k $ are bounded in $X_0^s(\Omega)$, resp. $X_0^s(\Omega^\star)$. Hence, up to subsequences, $u_k, v_k$ converge to functions $u \in X_0^s(\Omega)$, resp. $v \in X_0^s(\Omega^\star)$, weakly in $X_0^s$, strongly in $L^p$ for any $p\in [1,2^*_s)$ and a.e. in $\Omega$, resp. $\Omega^\star$. Moreover, $u$, resp. $v$, are solutions to problems \eqref{P}, resp. \eqref{probSim}. Hence we can pass to the limit in \eqref{compSimgamma1} getting 
$$
\int_0^r \mathfrc{u}(\tau)\tau^{N-1}\,\di\tau \le \int_0^r \mathfrc{v}(\tau)\tau^{N-1}\,\di\tau, \qquad r \ge 0,
$$
where $\mathfrc{u}(x)=\mathfrc{u}(|x|)=u^\star(x)$ and $\mathfrc{v}(x)=\mathfrc{v}(|x|)=v^\star(x)$, which is equivalent to \eqref{compSim}.
\hfill$\square$

\begin{remark}\label{remgamma}
In the local case, an analogous comparison result is proved in \cite{BCT}. There the authors prove that
\begin{equation}\label{loc}
\int_{B_r(0)} \frac{1}{u^\star(x)^\gamma}\,\di x \ge \int_{B_r(0)} \frac{1}{v^\star(x)^\gamma}\,\di x
\end{equation}
and, consequently, by multiplying both the integrands by $u^\star(x)^\gamma v^\star(x)^\gamma$ and using property $(b)$ in Proposition \ref{Propconves}, they have
\begin{equation}\label{loca}
\int_{B_r(0)} u^\star(x)^\gamma\,\di x \le \int_{B_r(0)}  v^\star(x)^\gamma\,\di x.
\end{equation}
Actually, by Lemma \ref{lemmaab} applied with the choice $a=u^\star(x)$ and $b=v^\star(x)$, we get
$$
u^\star(x)^{\gamma+1}\left(\frac{1}{u^\star(x)^\gamma}-\frac{1}{v^\star(x)^\gamma}\right)\le \gamma\left(v^\star(x)-u^\star(x)\right)
$$
so that \eqref{loc} and property $(b)$ in Proposition \ref{Propconves} imply
$$
\int_{B_r(0)} u^\star(x) \,\di x \le \int_{B_r(0)}  v^\star(x) \,\di x,
$$
which provides a more precise comparison result with respect to \eqref{loca} when $\gamma>1$.
\end{remark}
\begin{remark}
When $\gamma=0$, problem \eqref{P} coincides with the one discussed in \cite{FV}. If in \eqref{b5} we replace the right-hand side with $\dint_0^r f^\star(\tau)\tau^{N-1}\,\di \tau$, the subsequent arguments apply in order to gain an alternative proof of  the mass concentration estimate  \eqref{compSim}. We stress that in this case there is no need of an approximation procedure.
\end{remark}


\section{An explicit comparison result: proof of Theorem \ref{mainTheorem2}}

Theorem \ref{mainTheorem1} allows us to compare the solution to problem \eqref{P} with the solution to a symmetrized problem having the same structure. As in the local case (see \cite{BCT}), it is possible to compare $u$ with the solution to a symmetrized problem whose solution can be explicitely computed. Such a comparison result is a key ingredient to prove further regularity results.
%
%
%

\medskip
\noindent{\sc Proof of Theorem \ref{mainTheorem2}.}\\
We consider the same sequence of approximating problems \eqref{pk} that we examined in the previous section and, for $0\le t<||u_k||_{L^\infty(\Omega)}$ and $h>0$, we first prove that
 $\phi=u_k^{\gamma}\mathcal{G}_{t,h}\left(u^{\gamma+1}_k\right)$ can be chosen as test function in  \eqref{weakapprox}. Indeed, we have that by the mean value theorem, the boundedness of $u_{k}$ and the fact that $\mathcal{G}_{t,h}(\theta)\leq \theta$ yield
 \begin{align*}
 &|\phi(x)-\phi(y)|\leq u_{k}(x)\left|\mathcal{G}_{t,h}\left(u_k(x)^{\gamma+1}\right)-\mathcal{G}_{t,h}\left(u_k(y)^{\gamma+1}\right)\right|+\mathcal{G}_{t,h}\left(u_k(y)^{\gamma+1}\right)
 |u_{k}(x)^{\gamma}-u_{k}(y)^{\gamma}|\\
 &\leq C|u_k(x)^{\gamma+1}-u_k(y)^{\gamma+1}|+u_k(y)^{\gamma+1}\,|u_{k}(x)^{\gamma}-u_{k}(y)^{\gamma}|\\
 &\leq C_{\gamma}|u_{k}(x)-u_{k}(y)|+\gamma\left[\max\left\{u_{k}(x),u_{k}(y)\right\}\right]^{2\gamma}|u_{k}(x)-u_{k}(y)|
 \end{align*}
and the claim follows from the fact that $u_{k}\in X_{0}^{s}(\Omega)$.\\
Now putting $\phi$ in the weak formulation \eqref{weakapprox} we have
\begin{align}
\frac{\gamma(N,s)}{2}&\inte_{\R^{2N}}\frac{[u_k(x)-u_k(y)]\left[u_{k}(x)^{\gamma}\mathcal{G}_{t,h}\left(u_k(x)^{\gamma+1}\right)-u_{k}(y)^{\gamma}\mathcal{G}_{t,h}\left(u_k(y)^{\gamma+1}\right)\right]}{|x-y|^{N+2s}}\,\di x\di y \label{1}
\\&= \int_{\Omega} \frac{f_k(x)}{\left(u_k(x)+\frac{1}{k}\right)^{\gamma}}\,u_k(x)^{\gamma}\mathcal{G}_{t,h}\left(u_k(x)^{\gamma+1}\right)\,\di x.  \notag
\end{align}
By  using \eqref{leonori} with the choices
\[
\Phi(\theta)= \theta^{\gamma+1},\quad \varphi(x)=\mathcal{G}_{t,h}\left(u_k(x)^{\gamma+1}\right)
\]
we can estimate the left hand side of \eqref{1}  as follows
\begin{align}
&\inte_{\R^{2N}}\frac{[u_k(x)-u_k(y)]\left[u_{k}(x)^{\gamma}\mathcal{G}_{t,h}\left(u_k(x)^{\gamma+1}\right)-u_{k}(y)^{\gamma}\mathcal{G}_{t,h}\left(u_k(y)^{\gamma+1}\right)\right]}{|x-y|^{N+2s}}\,\di x\di y \label{bb}\\
&\geq 
\frac{1}{\gamma+1}\inte_{\R^{2N}}\frac{\left[u_k(x)^{\gamma+1}-u_k(y)^{\gamma+1}\right]\left[\mathcal{G}_{t,h}\left(u_k(x)^{\gamma+1}\right)-\mathcal{G}_{t,h}\left(u_k(y)^{\gamma+1}\right)\right]}{|x-y|^{N+2s}}\,\di x\di y.  \notag
\end{align}

Reasoning as in the previous section, we can show that
\begin{eqnarray}\label{3}
&& \inte_{\R^{2N}}\frac{\left[u_k(x)^{\gamma+1}-u_k(y)^{\gamma+1}\right]\left[\mathcal{G}_{t,h}\left(u_k(x)^{\gamma+1}\right)-\mathcal{G}_{t,h}\left(u_k(y)^{\gamma+1}\right)\right]}{|x-y|^{N+2s}}\,\di x\di y
\\
&&\hskip 3cm\geq  \inte_{\R^{2N}}\frac{\left[u_k^\star(x)^{\gamma+1}-u_k^\star(y)^{\gamma+1}\right]\left[\mathcal{G}_{t,h}\left(u_k^\star(x)^{\gamma+1}\right)-\mathcal{G}_{t,h}\left(u_k^\star(y)^{\gamma+1}\right)\right]}{|x-y|^{N+2s}}\,\di x\di y \notag
\end{eqnarray}
and
\begin{eqnarray}
&&\lim_{h\rightarrow 0^+} \frac 1 h  \inte_{\R^{2N}}\frac{\left[u_k^\star(x)^{\gamma+1}-u_k^\star(y)^{\gamma+1}\right]\left[\mathcal{G}_{t,h}\left(u_k^\star(x)^{\gamma+1}\right)-\mathcal{G}_{t,h}\left(u_k^\star(y)^{\gamma+1}\right)\right]}{|x-y|^{N+2s}}\,\di x\di y
\\
&&\hskip 3cm= \int_{0}^{r(t)}\left(\int_{r(t)}^{+\infty}(\mathfrc{u}_k(r)^{\gamma+1}-\mathfrc{u}_k(\rho)^{\gamma+1})\Theta_{N,s}(r,\rho)\rho^{N-1}\,\di\rho\right)r^{N-1}\,\di r, \notag
\end{eqnarray}
where, as in the previous section, $\mathfrc{u}_k(x)=\mathfrc{u}_k(|x|)$ stands for $u_k^\star(x)$. 
Regarding the right-hand side of \eqref{1}, we observe that
\begin{eqnarray*}
&&\int_{\Omega} \frac{f_k(x)}{\left(u_k(x)+\frac{1}{k}\right)^{\gamma}}\,u_k(x)^{\gamma}\mathcal{G}_{t,h}\left(u_k(x)^{\gamma+1}\right)\,\di x
\\
&&=h \int_{u_k^{\gamma+1}>t+h} \frac{f_k(x)}{\left(u_k(x)+\frac{1}{k}\right)^{\gamma}}\,u_k(x)^{\gamma}\,\di x
+\int_{t<u_k^{\gamma+1}\leq t+h} \frac{f_k(x)}{\left(u_k(x)+\frac{1}{k}\right)^{\gamma}}\,u_k(x)^{\gamma}(u_k(x)^{\gamma+1}-t)\,\di x 
\end{eqnarray*}
and 
$$\int_{t<u_k^{\gamma+1}\leq t+h} \frac{f_k(x)}{\left(u_k(x)+\frac{1}{k}\right)^{\gamma}}\,u_k(x)^{\gamma}(u_k(x)^{\gamma+1}-t)\,\di x\leq h\,||f||_{\infty}\int_{t<u_k^{\gamma+1}\leq t+h} \di x.$$
It follows that
\begin{equation}\label{2}
\lim_{h \to 0^+} \frac{1}{h}\int_{\Omega} \frac{f_k(x)}{\left(u_k(x)+\frac{1}{k}\right)^{\gamma}}\,u_k(x)^{\gamma}\mathcal{G}_{t,h}\left(u_k(x)^{\gamma+1}\right)\,\di x=\int_{u_k^{\gamma+1}>t} \frac{f_k(x)}{\left(u_k(x)+\frac{1}{k}\right)^{\gamma}}\,u_k(x)^{\gamma}\,\di x.
\end{equation}
It is easy to observe that
\begin{equation}\label{bbb}
\int_{u_k^{\gamma+1}>t} \frac{f_k(x)}{\left(u_k(x)+\frac{1}{k}\right)^{\gamma}}\,u_k(x)^{\gamma}\,\di x\leq \int_{u_k^{\gamma+1}>t} f(x)\,\di x\leq \int_0^{r(t)}f^\star(\rho)\rho^{N-1}\,\di \rho,
\end{equation}
being $\mathfrc{u}_k(r(t))^{\gamma+1}=t$.
From \eqref{1}-\eqref{bbb} we deduce
$$\frac{\gamma(N,s)}{2(\gamma+1)} \int_{0}^{r(t)}\left(\int_{r(t)}^{+\infty}(\mathfrc{u}_k(r)^{\gamma+1}-\mathfrc{u}_k(\rho)^{\gamma+1})\Theta_{N,s}(r,\rho)\rho^{N-1}\,\di \rho\right)r^{N-1}\,\di r \leq  \int_0^{r(t)}f^\star(\rho)\rho^{N-1}\,\di \rho.$$
Reasoning as in \cite{FV} we can show that actually the following inequality holds true for every $r\ge 0$:
\begin{equation}\label{u}
\frac{\gamma(N,s)}{2(\gamma+1)} \int_{0}^{r}\left(\int_{r}^{+\infty}(\mathfrc{u}_k(\tau)^{\gamma+1}-\mathfrc{u}_k(\rho)^{\gamma+1})\Theta_{N,s}(\tau,\rho)\rho^{N-1}\,\di\rho\right)\tau^{N-1}\,\di\tau \leq \int_0^r f^\star(\rho)\rho^{N-1}\,\di \rho.
\end{equation}
On the other hand, the solution to problem \eqref{probSimgamma} satisfies
\begin{equation}\label{v}
\frac{\gamma(N,s)}{2(\gamma+1)} \int_{0}^{r}\left(\int_{r}^{+\infty}(\mathfrc{v}(\tau)-\mathfrc{v}(\rho))\Theta_{N,s}(\tau,\rho)\rho^{N-1}\,\di\rho\right)\tau^{N-1}\,\di\tau = \int_0^r f^\star(\rho)\rho^{N-1}\,\di \rho.
\end{equation}
Subtracting \eqref{u} and \eqref{v},we can conclude as in the proof of Theorem \ref{mainTheorem1}.


\section{Some regularity results}

As an immediate consequence of Theorem \ref{mainTheorem2} we can prove the following regularity results, depending on the value of $\gamma$ and on the summability of $f$.

\begin{theorem}\label{estimates}
Let $s \in (0,1)$, $N \ge 2$, $\gamma>0$, and assume that $f \in L^p(\Omega)$, with $p\geq 2^{\ast}_{s}$, $f \ge 0$. If $u \in X_0^s(\Omega)$ is the weak solution to problem \eqref{P}, the following estimates hold true.
\begin{enumerate}
    \item If $p<\frac{N}{2s}$, then $u \in L^q(\Omega)$, with $q=\frac{Np(\gamma+1)}{N-2sp}$, and there exists a positive constant $C$ such that
    $$
    ||u||_{L^q(\Omega)}\le C ||f||^{1/(\gamma+1)}_{L^p(\Omega)}.
    $$
    \item If $p=\frac{N}{2s}$, then $u \in L_{\Phi}(\Omega)$, where $L_{\Phi}(\Omega)$ is the Orlicz space generated by the $N$-function
    $$
    \Phi(t)=\exp(|t|^{(\gamma+1)p'})-1.
    $$
    Moreover, there exists a positive constant $C$ such that
    $$
    ||u||_{L_{\Phi}(\Omega)}\le C ||f||_{L^p(\Omega)}^{1/(\gamma+1)}.
    $$
    \item If $p>\frac{N}{2s}$, then $u \in L^\infty(\Omega)$ and there exists a positive constant $C$ such that
    $$
    ||u||_{L^\infty(\Omega)}\le C ||f||_{L^p(\Omega)}^{1/(\gamma+1)}.
    $$
\end{enumerate}
\end{theorem}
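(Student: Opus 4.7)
The entry point is Theorem \ref{mainTheorem2}, which tells us that $u^{\gamma+1}\prec v$, where $v$ solves the \emph{linear} Dirichlet problem \eqref{probSimgamma} with radial datum $(\gamma+1)f^{\star}$ on the ball $\Omega^{\star}$. By Proposition \ref{Propconves}, mass concentration comparison yields $\|u^{\gamma+1}\|_{L^{r}(\Omega)}\leq \|v\|_{L^{r}(\Omega^\star)}$ for every $r\in[1,\infty]$, and more generally $\int\Phi(u^{\gamma+1})\,\di x\leq \int\Phi(v)\,\di x$ for every nonnegative convex Lipschitz $\Phi$ with $\Phi(0)=0$. Therefore the plan is to first prove the three estimates for the \emph{linear solution} $v$, and then pull them back to $u$ through the substitution $t\mapsto t^{\gamma+1}$.

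To bound $v$ I would use the Green function representation $v(x)=\int_{\Omega^{\star}}G^{s}_{\Omega^{\star}}(x,y)(\gamma+1)f^{\star}(y)\,\di y$ together with the classical pointwise bound $G^{s}_{\Omega^{\star}}(x,y)\leq C_{N,s}|x-y|^{-(N-2s)}$ for the Dirichlet Green kernel of the restricted fractional Laplacian on a ball. This majorizes $v$ by a constant multiple of the Riesz potential $I_{2s}(f^{\star})$, and the three ranges of $p$ then fall under standard Riesz-potential theory: the Hardy-Littlewood-Sobolev inequality $I_{2s}:L^{p}\to L^{Np/(N-2sp)}$ for $1<p<N/(2s)$; the Adams-Trudinger borderline embedding $I_{2s}:L^{N/(2s)}\to L_{\exp(|t|^{p'})-1}$ on bounded sets at $p=N/(2s)$; and the pointwise estimate $\|I_{2s}g\|_{L^\infty(\Omega^{\star})}\leq C\|g\|_{L^p(\Omega^\star)}$ for $p>N/(2s)$ via H\"older.

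Transferring these estimates to $u$ is then essentially a matter of reading $\|u^{\gamma+1}\|_{X}\leq \|v\|_{X}$ in the appropriate rearrangement-invariant space $X$. For case (1), choosing $X=L^{q/(\gamma+1)}$ with $q=(\gamma+1)Np/(N-2sp)$ gives $\|u\|_{L^{q}(\Omega)}^{\gamma+1}\leq C\|f\|_{L^p(\Omega)}$. For case (3), taking $X=L^{\infty}$ yields $\|u\|_{L^\infty(\Omega)}^{\gamma+1}\leq C\|f\|_{L^p(\Omega)}$. For case (2) I would apply Proposition \ref{Propconves}(c) with the convex $N$-function $\Phi_{1}(t)=\exp(|t|^{p'})-1$, obtaining a bound on $\int \Phi_{1}(u^{\gamma+1})\,\di x$ in terms of $\|f\|_{L^{N/(2s)}}$, and then observe that the change of variable $t\mapsto t^{\gamma+1}$ identifies this with $\int\Phi(u)\,\di x$ for $\Phi(t)=\exp(|t|^{(\gamma+1)p'})-1$, whence the claimed Luxemburg-norm bound.

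The main obstacle will be case (2): although the critical-exponent exponential embedding for $I_{2s}$ is classical, some care is required to express it as an inequality between Luxemburg norms in which the constant depends only on $\|f\|_{L^{N/(2s)}(\Omega)}$, and to verify that the substitution $t\mapsto t^{\gamma+1}$ preserves the equivalence class of the $N$-function so that exactly the exponent $(\gamma+1)p'$ appears in $\Phi$. The remaining cases (1) and (3) are essentially direct consequences of Theorem \ref{mainTheorem2} combined with standard linear fractional Sobolev estimates, so they should not present any additional difficulty beyond bookkeeping of the exponents.
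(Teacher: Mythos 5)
Your proposal is correct and its overall structure coincides with the paper's: both begin from Theorem \ref{mainTheorem2}, which gives $u^{\gamma+1}\prec v$, transfer the estimate to $u$ via the identity $\|u\|_{L^{q}(\Omega)}=\|u^{\gamma+1}\|_{L^{q/(\gamma+1)}(\Omega)}^{1/(\gamma+1)}$ (respectively via the convex-function characterization of $\prec$ in the borderline case), and then reduce the problem to regularity estimates for the solution $v$ of the linear symmetrized problem \eqref{probSimgamma}. The difference is in how the regularity of $v$ is obtained. The paper simply invokes \cite[Theorem 3.2]{FV}, which is precisely the $L^{q}$ / Orlicz / $L^{\infty}$ regularity statement for the linear fractional Dirichlet problem, and for the critical case $p=N/(2s)$ it passes through the maximal function inequality $(u^{\gamma+1})^{**}\le v^{**}$. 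You instead re-derive that linear regularity from scratch, majorizing $v$ pointwise by the Riesz potential $C\,I_{2s}(f^{\star})$ using the Green-function bound $G^{s}_{\Omega^{\star}}(x,y)\le C_{N,s}|x-y|^{2s-N}$, and then invoking Hardy--Littlewood--Sobolev, the Adams/Trudinger exponential embedding, and H\"older for the three ranges of $p$. This is a legitimate and more self-contained route; what the paper's approach buys is brevity and reuse of the already-established one-dimensional rearrangement machinery of \cite{FV}, while your route avoids re-importing that reference at the cost of needing the Green function comparison.

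Two small points worth flagging. First, in case (2) you plan to apply Proposition \ref{Propconves}(c) with $\Phi_{1}(t)=\exp(|t|^{p'})-1$, but that proposition is stated for \emph{Lipschitz} convex $\Phi$, which $\Phi_1$ is not. One either approximates by the truncations $\Phi_{1,M}(t)=\min\{\Phi_1(t),\Phi_1(M)+\Phi_1'(M)(t-M)\}$ and passes to the limit by monotone convergence, or, equivalently, works directly with the maximal-function comparison $(u^{\gamma+1})^{**}\le v^{**}$ as the paper does; either way the step is standard but should be stated. Second, the change of variables from $\Phi_1(u^{\gamma+1})$ to $\Phi(u)$ with $\Phi(t)=\exp(|t|^{(\gamma+1)p'})-1$ is an exact identity, not merely an $N$-function equivalence, since $(u^{\gamma+1})^{p'}=u^{(\gamma+1)p'}$, so the final exponent comes out exactly as in the statement; your worry about ``preserving the equivalence class'' is therefore unnecessary, and the Luxemburg norm bound $\|u\|_{L_\Phi(\Omega)}\le C\|f\|_{L^p(\Omega)}^{1/(\gamma+1)}$ follows directly from the scaled Adams bound on $v$ once you feed $\Phi_1(t/\lambda)$ into the concentration comparison with the appropriate $\lambda\sim\|f\|_{L^p}$.
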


\begin{proof}
We simply observe that for $q>\gamma+1$ we have
\[
\|u\|_{L^{q}(\Omega)}=\|u^{\gamma+1}\|_{\frac{q}{\gamma+1}}^{1/(\gamma+1)},
\]
therefore by  Theorem \ref{mainTheorem2}
\[
\|u\|_{L^{q}(\Omega)}\leq \|v\|_{L^{q}(\Omega^{\star})}^{1/(\gamma+1)}.
\]
Then we can apply the regularity result \cite[Theorem 3.2]{FV} to the solution $v$ to the symmetrized problem \eqref{probSimgamma}. Moreover, in the limit case $p=N/2s$ we notice that Theorem \ref{mainTheorem2} implies
\[
(u^{\gamma+1})^{\ast\ast}\le v^{\ast\ast}
\]
and arguing as in the proof of \cite[Theorem 3.2]{FV} the claim follows.

\end{proof}


\begin{remark}
We stress that when $\gamma=0$ we recover the estimates contained in \cite{FV}, while when $s=1$ we have the same estimates contained in \cite{BO,BCT}.
\end{remark}

We end the paper with the following energy estimate.
\begin{proposition}
Let $s\in(0,1), N\geq 2, \gamma>0$ and  assume that $f\in L^{(2^*_s)'}(\Omega), f\geq 0$. If $u\in X_0^s(\Omega)$ and $v \in X_0^s(\Omega^\star)$ are the weak solutions to problems \eqref{P} and \eqref{probSimgamma}, respectively, then 

\begin{equation*}\label{confrontoEnergiegamma+1}
||u^{\gamma+1}||_{X_0^s(\Omega)}\leq ||v||_{X_0^s(\Omega^\star)}.
\end{equation*}

\end{proposition}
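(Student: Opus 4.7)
The plan is to derive a pointwise-type fractional inequality for $u^{\gamma+1}$, test it against $u^{\gamma+1}$ itself, and finally transform the resulting right-hand side via the Hardy--Littlewood inequality and the mass concentration comparison from Theorem \ref{mainTheorem2}. Concretely, I would first apply the convexity version of the chain rule inequality \eqref{leonori} with $\Phi(t)=t^{\gamma+1}$ (suitably truncated so as to be Lipschitz if needed) to the equation $(-\Delta)^s u = f/u^\gamma$. Since $\Phi'(u)(-\Delta)^s u = (\gamma+1)u^\gamma\cdot f/u^\gamma = (\gamma+1)f$, this yields the weak fractional inequality
\[
(-\Delta)^s(u^{\gamma+1}) \leq (\gamma+1)f \qquad \text{in } \Omega,
\]
interpreted in the distributional form against nonnegative test functions in $X_0^s(\Omega)$.

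Next, I would use $u^{\gamma+1}$ itself as test function in the above inequality (it belongs to $X_0^s(\Omega)$ under the running assumptions, either directly from the $L^\infty$ bound on $u$ coming from the regularity theory or by first working on the approximating sequence $u_k$ from \eqref{pk} and passing to the limit). This produces the energy estimate
\[
\tfrac{\gamma(N,s)}{2}\|u^{\gamma+1}\|_{X_0^s(\Omega)}^2 \leq (\gamma+1)\int_\Omega f(x)\, u(x)^{\gamma+1}\,\di x.
\]

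To control the right-hand side, I would combine the Hardy--Littlewood inequality \eqref{hl} with Theorem \ref{mainTheorem2}. Property (iii) of rearrangements gives $(u^{\gamma+1})^\star = (u^\star)^{\gamma+1}$, and Theorem \ref{mainTheorem2} exactly says that $(u^\star)^{\gamma+1}\prec v$ in $\Omega^\star$. Applying \eqref{hl} and then Proposition \ref{Propconves}(b) with the nonnegative, radially decreasing weight $\varphi=f^\star$, I obtain
\[
\int_\Omega f\, u^{\gamma+1}\,\di x \leq \int_{\Omega^\star} f^\star\,(u^\star)^{\gamma+1}\,\di x \leq \int_{\Omega^\star} f^\star\, v\,\di x.
\]
Finally, testing $v$ with itself in the weak formulation of the symmetrized problem \eqref{probSimgamma} gives the identity
\[
\tfrac{\gamma(N,s)}{2}\|v\|_{X_0^s(\Omega^\star)}^2 = (\gamma+1)\int_{\Omega^\star} f^\star v\,\di x,
\]
and chaining the three displays yields $\|u^{\gamma+1}\|_{X_0^s(\Omega)}\leq \|v\|_{X_0^s(\Omega^\star)}$.

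The main technical obstacle is justifying that $u^{\gamma+1}$ is an admissible test function for the weak inequality derived in the first step; this amounts to verifying $u^{\gamma+1}\in X_0^s(\Omega)$ and approximating $\Phi(t)=t^{\gamma+1}$ by Lipschitz convex functions vanishing at $0$ in order to apply \eqref{leonori}. I would sidestep this either by working throughout on the bounded solutions $u_k\in X_0^s(\Omega)\cap L^\infty(\Omega)$ of the truncated problem \eqref{pk} (for which $u_k^{\gamma+1}$ is clearly in $X_0^s(\Omega)$ and the truncated $\Phi$ is Lipschitz), deriving the corresponding estimate
\[
\tfrac{\gamma(N,s)}{2}\|u_k^{\gamma+1}\|_{X_0^s(\Omega)}^2 \leq (\gamma+1)\int_\Omega f_k\, u_k^{\gamma+1}\,\di x,
\]
and then passing to the limit using the weak lower semicontinuity of $[\,\cdot\,]_{H^s(\R^N)}$ on the left and monotone/dominated convergence on the right; the mass concentration comparison survives the limit by the last step of the proof of Theorem \ref{mainTheorem2}.
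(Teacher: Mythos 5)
Your proposal is correct and follows essentially the same route as the paper's own proof: both apply the convexity inequality \eqref{leonori} with a Lipschitz truncation of $\Phi(t)=t^{\gamma+1}$ to the approximating problems \eqref{pk}, use the resulting energy estimate together with Hardy--Littlewood and the $u_k$-level version of the mass concentration comparison of Theorem \ref{mainTheorem2} to bound $\int_\Omega f\,u_k^{\gamma+1}$ by $\int_{\Omega^\star} f^\star v$, identify the latter with $\tfrac{\gamma(N,s)}{2(\gamma+1)}\|v\|^2_{X_0^s(\Omega^\star)}$ by testing \eqref{probSimgamma} with $v$, and pass to the limit in the truncation and in $k$ by lower semicontinuity of the Gagliardo seminorm. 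The only cosmetic difference is that the paper works directly with the specific truncation
$\Phi_T(\theta)=\theta^{\gamma+1}$ for $\theta<T$ and $\Phi_T(\theta)=(\gamma+1)T^\gamma\theta-\gamma T^{\gamma+1}$ for $\theta\ge T$, chosen so that \emph{both} $\Phi_T$ and $\Phi_T\Phi_T'$ are Lipschitz (so that $\Phi_T'(u_k)\Phi_T(u_k)$ is an admissible test function in \eqref{weakapprox}), and plugs $\varphi=\Phi_T(u_k)$ directly into \eqref{leonori} rather than first stating the weak inequality $(-\Delta)^s(u^{\gamma+1})\le(\gamma+1)f$ and then testing it; these are equivalent presentations of the same computation.
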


\proof
Let $k \in \N$ and let $u_k$ be a solution to \eqref{pk}. Let $T>1$. We consider the following function $\Phi_{T}:[0,+\infty)\rightarrow [0,+\infty)$ defined as

$$\Phi_{T}(\theta)=
 \begin{cases}
\theta^{\gamma+1}\quad\qquad\quad\qquad\quad\quad\mbox{if}\,\,0\leq \theta<T\\
(\gamma+1)T^{\gamma}\theta-\gamma \,T^{\gamma+1}\quad\mbox{if}\,\,\theta\geq T.
\end{cases}
$$
Since $\Phi_{T}(\theta)$ and $\Phi_{T}(\theta)\Phi'_{T}(\theta)$ are  Lipschitz continuous functions, $\Phi_{T}(u_{k})$ and $\Phi_{T}(u_{k})\Phi'_{T}(u_{k})$ belong to $X_{0}^s(\Omega)$. Inequality \eqref{leonori} with the choice $\varphi=\Phi_{T}(u_{k})$ implies
\begin{align}\label{Rgamma+1}
|| \Phi_{T}(u_k)&||^2_{X_0^s(\Omega))}=\inte_{\R^{2N}}\frac{|\Phi_{T}(u_{k}(x))-\Phi_{T}(u_{k}(y))|^{2}}{|x-y|^{N+2s}}\,\di x\di y 
\\
&\leq\inte_{\R^{2N}}\frac{\left[u_{k}(x)-u_{k}(y)\right]\left[\Phi_{T}^{\prime}(u_{k}(x)\Phi_{T}(u_{k}(x))-\Phi_{T}^{\prime}(u_{k}(y))\Phi_{T}(u_{k}(y))\right]}{|x-y|^{N+2s}}\,\di x\di y
\notag
\\
&=\frac{2}{\gamma(N,s)} \int_{\Omega}\frac{f_k(x)}{(u_k(x)+\frac{1}{k})^{\gamma}}\Phi_{T}'(u_k(x))\Phi_{T}(u_k(x))\,\di x \notag
\\
&\leq \frac{2 (\gamma+1)}{\gamma(N,s)}\int_{\Omega}\frac{f_k(x)}{(u_k(x)+\frac{1}{k})^{\gamma}} u_k(x)^{\gamma+1} u_k(x)^{\gamma}\,\di x \notag
\\
&\leq \frac{2 (\gamma+1)}{\gamma(N,s)}\int_{\Omega}f(x) u_k(x)^{\gamma+1}\,\di x. \notag
\end{align}
Recall now that the proof of Theorem \ref{mainTheorem2} gives \eqref{compSim2} being $u$ replaced by $u_{k}.$ Then using the Hardy-Littlewood  inequality \eqref{hl} and Proposition \ref{Propconves}, we can estimate the right hand side of \eqref{Rgamma+1} as follows
$$\int_{\Omega}f (x)u_k(x)^{\gamma+1}\,\di x\leq \int_{\Omega^\star}f^\star(x)v(x)\,\di x,$$
then from \eqref{Rgamma+1} we conclude

\begin{equation}\label{enfin}
|| \Phi_T(u_k)||^2_{X_0^s(\Omega)}\leq \frac{2 (\gamma+1)}{\gamma(N,s)}\int_{\Omega^\star}f^\star(x)v(x)\,\di x=||v||_{X_0^s(\Omega^\star)}^{2}.
\end{equation}
Estimate \eqref{enfin} implies that the family $\Phi_{T}(u_{k})$ is uniformly bounded with respect to $T>1$ and $k\in\mathbb{N}$. Consequently, by the Sobolev embedding theorem we can extract a subsequence $T_{\ell}\rightarrow+\infty$ such that
\[
\Phi_{T_{\ell}}(u_{k})\rightharpoonup u_{k}^{\gamma+1}\quad \mbox{weakly in }X_{0}^{s}(\Omega),\quad \Phi_{T_{\ell}}(u_{k})\rightarrow u_{k}^{\gamma+1}\quad \mbox{stronlgy in }L^q(\Omega), \> q<2_{s}^{\ast},
\] 
as $\ell\rightarrow +\infty$. Then we can pass to the limit in \eqref{enfin} and obtain
\begin{equation*}
||u_k^{\gamma+1}||_{X_0^s(\Omega)}\leq ||v||_{X_0^s(\Omega^\star)}.
\end{equation*}

\noindent Thanks to the lower semicontinuity of the norm, we can pass to the limit in the previous inequality as $k$ goes to $+\infty$ and we get the claim.
\endproof

%
%
%
%
\section*{Acknowledgments}

All authors were partially supported by Italian MIUR through research project PRIN 2017 ``Direct and inverse problems for partial differential equations: theoretical aspects and applications'' and by Gruppo Nazionale per l'Analisi Matematica, la Probabilit\`a e le loro Applicazioni (GNAMPA) of Istituto Nazionale di Alta Matematica (INdAM). B.V. wishes to warmly thank L. Brasco for fruitful discussions.



\medskip
{\footnotesize
\noindent (Barbara Brandolini) Dipartimento di Matematica e Informatica, Universit\`a degli Studi 
di Palermo, 
via Archirafi 34, 90123 Palermo, Italy, e-mail: \texttt{barbara.brandolini@unipa.it}}

 \medskip
{\footnotesize
\noindent (Ida de Bonis) Dipartimento di Pianificazione, Design, Tecnologia dell'Architettura, Sapienza Universit\`a di Roma, via Flaminia 72, 00196 Roma, Italy, e-mail: \texttt{ida.debonis@uniroma1.it}}

\medskip
{\footnotesize 
\noindent (Vincenzo Ferone) Dipartimento di Matematica e Applicazioni ``R. Caccioppoli'', Universit\`a degli Studi 
di
Napoli Federico II, Complesso Univ. Monte S. Angelo, via Cintia, 80126 Napoli,
Italy, e-mail: \texttt{ferone@unina.it}} 
\medskip

{\footnotesize
\noindent (Bruno Volzone) Dipartimento di Scienze Economiche, Giuridiche, Informatiche e Motorie, Universit\`a degli Studi 
di Napoli ``Parthenope'', Via Guglielmo Pepe, Rione Gescal, 80035 Nola, Italy, e-mail: \texttt{bruno.volzone@uniparthenope.it}}

\end{document}